\documentclass{article}
\pdfoutput=1
\usepackage{graphicx} % Required for inserting images
\usepackage{authblk}
\usepackage{array}
\usepackage{xcolor}

\title{Entropy of Random Geometric Graphs in High and Low Dimensions}
\author[1]{Oliver Baker}
\author[1]{Carl P. Dettmann}
\affil[1]{School of Mathematics, University of Bristol}
\date{\today}

\usepackage[a4paper,
            total={126mm, 195mm}]{geometry}
            
% \usepackage[a4paper,
%             bindingoffset=0.2in,
%             left=.8in,
%             right=.8in,
%             top=1in,
%             bottom=.8in]{geometry}
\usepackage{graphicx} % Required for inserting images
\usepackage{hyperref}
\usepackage[nottoc,numbib]{tocbibind}
\usepackage{amsmath}
\usepackage{amsthm}
\usepackage{amssymb}
\usepackage{amsfonts}
\usepackage{breqn}
\usepackage{pdflscape}
\usepackage{algorithm}
\usepackage{algpseudocode}
\usepackage{multicol}
\usepackage{blindtext}
\usepackage[]{caption}

\DeclareMathOperator*{\argmax}{argmax}

\newtheorem{definition}{Definition}
\newtheorem{lemma}{Lemma}
\newtheorem{theorem}{Theorem}
\newtheorem{corollary}{Corollary}
\newtheorem{proposition}{Proposition}

\newtheorem{conjecture}{Conjecture}
\newtheorem{remark}{Remark}

\newcommand{\tti}{\rightarrow\infty}
\newcommand{\graph}{\mathcal{G}}
\newcommand{\bigO}[1]{\mathcal{O}\left(#1\right)}

\newcommand{\prob}[1]{\mathbb{P}\left(#1\right)}

\newcommand{\ttz}[0]{\rightarrow 0}

\begin{document}

\maketitle

\begin{abstract}
    We use a multivariate central limit theorem (CLT) to study the distribution of random geometric graphs (RGGs) on the cube and torus in the high-dimensional limit with general node distributions. We find that the distribution of RGGs on the torus converges to the Erd\H os-R\'enyi (ER) ensemble when the nodes are uniformly distributed, but that the distribution for RGGs with non-uniformly distributed nodes on the torus, and for RGGs with any distribution of nodes with kurtosis greater than 1 on the cube is different. In these cases, the distribution has a lower maximum entropy than the ER ensemble, but is still symmetric. Soft RGGs in either geometry converge to the ER ensemble. An Edgeworth correction to the CLT is then developed to derive the $\bigO{d^{-\frac{1}{2}}}$ sub-leading term of the Shannon entropy of RGGs in dimension for both geometries. We also provide numerical approximations of maximum entropy in low-dimensional hard and soft RGGs, and calculate exactly the entropy of hard RGGs with 3 nodes in the one-dimensional cube and torus. 
\end{abstract}

\newpage

\section{Introduction}
The maximum entropy principle states that when modelling a random process, one should choose the distribution with the maximum entropy, since this assumes the least about the process \cite{jaynes1982rationale}. This means that understanding the entropy of a distribution on some model will help us to understand the typical realisation of the model. In the current age, we are surrounded by processes that permit a representation as networks, including those in biology \cite{west2012differential}, social sciences \cite{marin2011social}, telecommunications \cite{haenggi2009stochastic} and data science \cite{zhang2022joint}. \newline

\noindent The dimension of networks in data science in particular is ever increasing, where in the modern age fields such as computer vision and large language models require datasets with a huge number of inputs. For example one of the most famous baseline datasets for image recognition, MNIST has 784 features \cite{lecun1998gradient}, and much more recently, the embedding space of vectors in GPT3 is 12288-dimensional \cite{NEURIPS2020_1457c0d6}. It is also commonly observed that many high dimensional datasets exhibit a latent embedding, which can be interpreted as some notion of geometry. For example, in image recognition, for a list of pixel colours, there is a geometric interpretation where certain pixels are close to each other in the image. Therefore for algorithms such as nearest neighbour clustering which rely on proximity graphs, it is crucial to understand the geometry of high dimensional spaces, and how we construct networks on them in the most appropriate manner \cite{mittal2019clustering}. \newline

\noindent The concept of graph entropy was first introduced by Rashevsky in 1955 \cite{rashevsky1955life}, and has since become a well-researched area. The question of determining the uncertainty or complexity of a random network has inspired multiple definitions of the concept of graph entropy, including Shannon entropy  \cite{coon2018entropy}, Von Neumann entropy \cite{passerini2009quantifying}, Gibbs entropy \cite{anand2010gibbs} and Kolmogorov complexity \cite{mowshowitz2012entropy}. In this work we will focus on the Shannon entropy formalism, which calculates the entropy in the information-theoretic sense of a distribution of labelled random graphs. \newline

\noindent The Random Geometric Graph (RGG), first introduced by Gilbert in 1961 \cite{gilbert1961random} (under the name of `random planar networks') is an example of such a proximity graph. The RGG is constructed by randomly distributing points in a domain, and connecting those which lie within a specified distance of each other. An extension to the model is the Soft Random Geometric Graph (SRGG), which adds an extra layer of randomness by connecting points with a probability that depends on their pairwise distance. Both models are very well studied from the perspective of, for example, percolation, \cite{penrose2003random} connectivity \cite{penrose2016connectivity}, and centrality measures \cite{giles2015betweenness} which can be seen as \textit{graph-specific} measures, in the sense that we can measure the betweenness centrality of an individual graph, and we can say whether an individual graph is connected. \newline

\noindent Less research however has been directed towards \textit{ensemble-wide} properties of SRGGs, for example we cannot measure the Shannon entropy of an individual graph, but rather the entropy of a distribution induced on a set of graphs. Currently, we are able to obtain bounds on the entropy of an SRGG ensemble \cite{coon2018entropy, badiu2018distribution}, and we can characterise and maximise the entropy of spatial network ensembles given exact and expected constraints, such as the degree distribution \cite{bianconi2021information}. We are unable to analytically maximise the entropy of an unconstrained ensemble, except when conditioned on the location of nodes \cite{coon2018entropy}. In \cite{paton2022entropy}, the important topic of whether we should discuss the entropy of labelled or unlabelled networks is discussed. We focus on the labelled case here, and leave the unlabelled case as future research. \newline

\noindent  High dimensional random geometric graphs have also been of interest in the literature. Devroye et al. showed in 2011 that the random geometric graph embedded on the surface of the unit sphere approaches the Erd\H os-R\'enyi (ER) graph (denoted as $G(n,p)$) in the high-dimensional limit \cite{devroye2011high}, whereas more recently Erba et al. \cite{erba2020random} have shown that when we embed the RGG on the hypercube, the expected clique number differs from the ER limit. There has been a large body of work in testing for geometry in RGGs. We draw particular attention to \cite{bangachev2023detection} which provides testing thresholds for RGGs on the torus using signed triangles, \cite{liu2022testing} which uses RGGs on the surface of the unit sphere with belief propagation algorithms, \cite{friedrich2023cliques} which tests for cliques in geometric inhomogeneous random graphs in the high-dimensional torus, and, very recently, \cite{baguley2025testing} which uses an Edgeworth expansion for geometry detection, as well as to investigate spectral properties of the RGGs on the torus. We direct interested readers to references within these articles for further information. All of these works consider only uniform distributions of nodes, and show that the RGG ensemble with uniformly distributed nodes converges to the ER ensemble. For other distributions, we can give examples such as \cite{brennan2024threshold} which studies the \textit{anisotropic} RGG with a variable $n$, and the book of Penrose \cite{penrose2003random} that deals with the connectivity threshold in  $\mathbb{R}^d$  for Gaussian distributed points. We study random geometric graphs in the regime where we have a fixed number of nodes, connection range dependent only on the dimension of the domain, and in both the high-dimensional cube and torus with more general node distributions, requiring only independence in each coordinate.

In this work, we make progress in the knowledge of the entropy of random geometric graph ensembles in low and high dimensions by
\begin{enumerate}
    \item Analytically calculating the entropy of 3-node hard RGG ensembles in the one dimensional cube and torus,
    \item Performing numerical simulations to investigate the maximum entropy of SRGGs in low dimensions,
    \item Proving that distributing nodes non-uniformly in $\mathbb{T}^d$ causes the hard RGG to deviate from the ER limit as $d\tti$, 
    \item Proving that the hard RGG ensemble in $[0,1]^d$ never converges to the ER limit for ensembles where the nodes are i.i.d. and have kurtosis greater than 1 in each coordinate, 
    \item  Deriving the scaling of entropy in dimension in both geometries using an Edgeworth correction to the CLT.  
\end{enumerate} 
The rest of the paper is structured as follows. Section 2 outlines the necessary background knowledge for the paper. Section 3 contains the exact calculation of RGG entropy in $[0,1]$ and the 1D torus $\mathbb{T}$ for $n=3$, and numerical simulations for SRGGs in low dimensions. Section 4 contains the multivariate central limit theorems for cubes and tori, the proofs of convergence (or non-convergence) to $G(n,p)$ for different node distributions for hard and soft RGGs, and numerical evidence that the entropy has a unique maximum in the connection range parameter. In Section 5 we develop an Edgeworth correction to the CLTs for a uniform distribution of nodes, and finally conclude the work in Section 6. 

\section{Preliminaries}
In this paper, we will work in two geometric settings. The first is the $d$-dimensional unit cube $[0,1]^d$ with the Euclidean norm, $\|x\|$. The second is the $d$-dimensional unit torus $\mathbb{T}^d=(\mathbb{R}\setminus\mathbb{Z})^d$. We define the distance between two points $x = (x_1,...x_d), y=(y_1,...y_d) \in \mathbb{T}^d$ as 
\begin{equation}
    \label{eq:torus_metric}
    \rho_T(x,y) = \sqrt{\sum_{i=1}^d \min(|x_i-y_i|, 1-|x_i-y_i|)^2}
\end{equation}  
We will construct a \textit{random geometric graph} in each geometry, and analyse the \textit{entropy of the ensemble} as $d\tti$. We define these terms below.
\begin{definition}
    A (hard) Random Geometric Graph (RGG) $G = (V,E)$ on the metric space $(\Omega, \rho)$ is constructed as follows. Distribute $n$ points independently and at random at points $X_1,...,X_n$ in $\Omega$ according to a probability measure $\nu$ to form the vertex set $V = \{1,...,n\}$. Then, form the edge set $E$ by including each edge $(i,j)$ if and only if $\rho(X_i, X_j) \leq r_0$ where $r_0$ is the `connection radius'.
\end{definition}
\begin{definition}
    A Soft Random Geometric Graph (SRGG) $G = (V,E)$ on the metric space $(\Omega, \rho)$ is constructed as follows. Distribute $n$ points independently and at random at points $X_1,...,X_n$ in $\Omega$ according to a probability measure $\nu$ to form the vertex set $V = \{1,...,n\}$. Then, form the edge set $E$ by including each edge $(i,j)$ independently with probability $p(\rho(X_i, X_j)/r_0)$ where $r_0$ is the `connection radius'. Here, the `connection function' $p : \mathbb{R}_{\geq 0} \rightarrow [0,1]$ is a non-increasing function.
\end{definition}
When considering the limit of high dimension ($d\tti$) in either of these cases, we take $r_0$ to be a function of $d$, but \textit{not} of the number of nodes $n$, which we keep fixed. It is clear that a hard RGG is a special case of a SRGG with connection function $p(r/r_0) = \mathbb{I}(r\leq r_0)$ where $\mathbb{I}$ is the indicator function. Often the term RGG is used to refer to hard RGGs specifically, but in the remainder of this paper, we will always state whether the RGG in question is formed using a hard or soft connection function to avoid confusion. A common connection function used in the wireless communications literature is the \textit{Rayleigh Fading} connection function, given by \cite{dettmann2016random}
\begin{equation}
    p(r/r_0) = \exp\left(-\left(\frac{r}{r_0}\right)^\eta\right)
\end{equation}
where $\eta$ is known as the \textit{path loss exponent}, and is general considered to be between 2 and 6 in practical applications \cite{haenggi2009stochastic}. When $\eta=1$ the SRGG is known as the Waxman graph \cite{waxman1988routing}. When $\eta\tti$, we recover the hard RGG formalism. We will use this connection function in Section \ref{low_dim}. 
\begin{definition}
    An `ensemble` $\graph$ of (S)RGGs is the set of all possible (S)RGGs that can be constructed with a fixed $r_0$, $n$, $\Omega$, $\nu$ and $\rho$ (and $p$ for the SRGG). We say a sequence of graph ensembles $\{\graph_{d}\}$ (with parameters dependent on $d$) equipped with probability measures $\mathbb{P}_d$ converges in distribution to a graph ensemble $\graph$ with probability measure $\mathbb{P}$ as $d\tti$ if for all labelled graphs $g$ with $n$ nodes
    \begin{equation}
        \mathbb{P}_d(g) \rightarrow \prob{g}
    \end{equation}
\end{definition}
Finally, we define the \textit{entropy} of the ensemble.
\begin{definition}
    The `entropy' of an ensemble $\graph$ equipped with a probability measure $\mathbb{P}$ is given by
    \begin{equation}
        H(\graph) := -\sum_{g \in \graph} \prob{g}\log \prob{g}
    \end{equation}
    where the logarithm is to base 2.
\end{definition}
In general, the quantity $\prob{g}$ is intractable to compute. If we let $f_{\vec{R}}(\vec{r})$ be the joint density of the pairwise distances $r_{ij}$ between $n$ independently placed points in $\Omega$ according to $\nu$, then
\begin{equation}
    \label{eq:graph_probability}
    \prob{g} = \int_{[0,D]^{\binom{n}{2}}} f_{\vec{R}}(\vec{r})\prod_{1\leq i < j \leq n} p(r_{ij}/r_0)^{x_{ij}}(1-p(r_{ij}/r_0))^{1-x_{ij}} d\vec{r}
\end{equation}
where $D$ is the diameter (maximum distance between two points) of $\Omega$, $x_{ij}$ is the indicator variable for when $(i,j) \in E$ (the edge set of $g$), and $d\vec{r} = \prod_{1\leq i<j \leq n} dr_{ij}$. Note that the density is well defined only when $n < d+2$. For example, if $d=1$ and $n=3$, then the largest distance is the sum of the smaller two. We will write $f(r)$ for the density of one pair distance. 
\begin{definition}
    The `average connection probability' of an ensemble $\mathcal{G}$ is defined as 
    \begin{equation}
        \bar{p} = \bar{p}(r_0) := \int_0^D f(r)p(r/r_0) dr
    \end{equation}
    If $\graph$ is an ensemble of hard RGGs, this reduces to
    \begin{equation}
        \bar{p}(r_0) = \int_0^{r_0} f(r)dr
    \end{equation}
    Define the `entropy maximising connection range' as 
    \begin{equation}
        \hat{r}_0 = \argmax_{r_0} H(\mathcal{G})
    \end{equation}
    and the `entropy maximising average connection probability' as $\bar{p}_{max} = \bar{p}(\hat{r}_0)$. Note that we assume the entropy has a unique maximum in $r_0$, which is the case for all examples we consider in this paper.
\end{definition}
We note that if for an SRGG ensemble $\graph$, $p(r/r_0)$ is independent of $r$, then the edges of the SRGG are independent, and the ensemble is equivalent to the Erd\H os-R\'enyi (ER) ensemble $G(n, \bar{p})$. The entropy of $G(n,p)$ is $H(G(n,p)) = \binom{n}{2}h_2(p)$, maximised at $p = \frac{1}{2}$ to give $H(G(n,1/2)) = \binom{n}{2}$. Here $h_2(p)$ is the binary entropy function,
\begin{equation}
    h_2(p) = -p\log p - (1-p)\log (1-p)
\end{equation}
When $p=1/2$, there is a uniform distribution of graphs in $G(n,p)$, and in turn, a binomial edge distribution. This ensemble reaches the maximum possible entropy of any graph ensemble on $n$ nodes.
\section{RGG Entropy for Small $n$ and $d$}
\label{low_dim}
In low dimensions, we have an explicit expression for the pair distance expression $f(r)$ if we assume a \textit{uniform} distribution of nodes in $\Omega$. This allows us to derive exact entropy and $\bar{p}_{max}$ results for hard RGGs. For the simplest non-trivial case, ($d=1$ and $n=3$), we can obtain exact expressions for the ensemble entropy. This is the first time that the exact ensemble \textit{entropy} has been obtained for any (S)RGG ensemble. However, we draw particular attention to the work of Badiu and Coon \cite{badiu2018distribution}, who derived an exact expression for the 3 node distance density $f_{\vec{R}}(r_{12}, r_{13}, r_{23})$ in the unit disc and a uniform distribution of nodes.  They then used this to place an upper bound on the RGG entropy, that is tighter than the previous best upper bound which assumes edge independence. Explicitly, if $\graph_n$ is the ensemble of (S)RGGs on $n$ nodes,
\begin{equation}
    \frac{H(\graph_n)}{\binom{n}{2}} \leq \frac{H(\graph_3)}{3} \leq H(\graph_2) = h_2(\bar{p}) 
\end{equation}
By performing these calculations in 1D, we are able to bound the RGG entropy of arbitrarily large RGGs in 1D using their method. 

For higher dimensions, and SRGGs (where the connection function introduces extra complexity in the calculation) we use a Monte-Carlo simulation to estimate the entropy. The details of the calculations will be given in the Appendix, and we show the main ideas here. \newline

\subsection{Exact Entropy Calculations}

In the following, we assume a \textit{hard} RGG formalism, with $d=1$, $n=3$, and $\nu$ being the uniform density on $\Omega$, which we will take as $\mathbb{T}$ and $[0,1]$ in turn. The distance metric will be $\rho_T$ (as defined in equation (\ref{eq:torus_metric})) for $\mathbb{T}$, and the Euclidean metric for $[0,1]$. \\

The strategy for computing the entropy is the same in each case: Let $p_k$ be the probability of a labelled graph drawn from a ensemble of 3-node graphs $\mathcal{G}$ being a specific labelled graph with $k$ edges. Then for a 3 node graph $p_0+3p_1+3p_2+p_3 = 1$, since there are 1,3,3 and 1 isomorphic graphs of edge count 0,1,2 and 3 respectively. We will calculate each of these probabilities individually.

\subsubsection{Exact Entropy Calculation for $\Omega = \mathbb{T}$}

We will start with $p_3$. Let $z_i$ be the position of the $i^{\text{th}}$ node. This probability is given by
\begin{equation}
    p_3 = \int_{[0,1]^3} \mathbb{I}\{\rho_T(z_1,z_2)<r_0\}\mathbb{I}\{\rho_T(z_1,z_3)<r_0\}\mathbb{I}\{\rho_T(z_2,z_3)<r_0\}dz_1dz_2dz_3
\end{equation}
 We first deal with the case when $r_0 < 1/3$ so that $p_0 > 0$. Now due to the translation invariance property, we may, without loss of generality, assume that $z_1 = 0$. Following the calculations through for $r_0 < 1/3$, then $1/3 < r_0 < 1/2$ (see Appendix \ref{exact_calcs}), we obtain:
\begin{table}[ht]
    \centering
    {\renewcommand{\arraystretch}{1.5}
    \begin{tabular}{ccc} \hline
        $p_k$ & $0< r_0 < 1/3$ & $1/3< r_0 < 1/2$ \\ \hline
        $p_3$ & $3r_0^2$ & $12r_0^2-6r_0+1$ \\
        $p_2$ & $r_0^2$ & $-8r_0^2+6r_0-1$ \\
        $p_1$ & $-5r_0^2+2r_0$ & $(2r_0-1)^2$ \\
        $p_0$ & $(1-3r_0)^2$ & $0$ \\ \hline
    \end{tabular}}
    \caption{Individual graph probabilities for the 3 node hard RGG in $\mathbb{T}$}
    \label{tab:torus_exact}
\end{table}\\
So let
\begin{gather}
H_{\mathbb{T}}^<(r_0) = -(1-3r_0)^2\log((1-3r_0)^2) - 3(2r_0-5r_0^2)\log(2r_0-5r_0^2) \nonumber \\- 3r_0^2\log(r_0^2) - 3r_0^2\log(3r_0^2)
\end{gather}
and
\begin{gather}
H_{\mathbb{T}}^>(r_0) = - (12r_0^2-6r_0+1)\log(12r_0^2-6r_0+1) \nonumber \\
- 3(6r_0 - 1 - 8r_0^2)\log(6r_0 - 1 - 8r_0^2) - 3(2r_0-1)^2\log((2r_0-1)^2)
\end{gather}
then
\begin{equation}
    H(\mathcal{G}) = \begin{cases}
    H_{\mathbb{T}}^<(r_0) & 0 < r_0 < 1/3 \\
    H_{\mathbb{T}}^>(r_0) & 1/3 \leq r_0 \leq 1/2 \\
    0 & \text{otherwise}
   \end{cases}
\end{equation}
By differentiating, we can show that this is maximised when $r_0 = 1/4$, thus for the 1D torus and 3 nodes
\[
\bar{p}_{max} = \int_0^{\frac{1}{2}} 2\mathbb{I}\left(r < \frac{1}{4}\right) dr = \frac{1}{2}
\]
At this point, we may wonder if the entropy is always maximised when the average connection probability is one half. The next example demonstrates that this is not the case.

\subsubsection{Exact Entropy Calculation for $\Omega = [0,1]$}
We will use the same method as before, but this time the integrals, whilst still tractable, become more intricate due to the removal of the translation invariance property. In order to simplify the problem, we will assume that $z_1 < z_2 < z_3$, then multiply the result by 6. We split the calculation into two cases, first when $r_0<1/2$, and then when $1/2<r_0<1$. Once again, the calculations are done in Appendix \ref{exact_calcs}.
\begin{table}[ht]
    \centering
    {\renewcommand{\arraystretch}{1.5}
    \begin{tabular}{ccc} \hline
        $p_k$ & $0< r_0 < 1/2$ & $1/2< r_0 < 1$ \\ \hline
        $p_3$ & $-2r_0^3+3r_0^2$ & $-2r_0^3+3r_0^2$ \\
        $p_2$ & $ - \frac{4}{3}r_0^3+r_0^2$ & $\frac{4}{3}r_0^3 - \frac{1}{3}({3}r_0-1)^2$ \\
        $p_1$ & $\frac{14}{3}r_0^3-6r_0^2+2r_0$ & $\frac{2}{3}(1-r_0)^{3}$ \\
        $p_0$ & $(1-2r_0)^{3}$ & $0$ \\ \hline
    \end{tabular}}
    \caption{Individual graph probabilities for the 3 node hard RGG in $[0,1]$}
    \label{tab:line_exact}
\end{table}

So, with
\begin{dmath}
    H_{[0,1]}^{<}(r_0) = -(3r_0^2-2r_0^3)\log(3r_0^2-2r_0^3) - 3\left(r_0^2 - \frac{4}{3}r_0^3\right)\log\left(r_0^2 - \frac{4}{3}r_0^3\right) -  3\left(\frac{14}{3}r_0^3 - 6r_0^2 + 2r_0\right)\log\left(\frac{14}{3}r_0^3 - 6r_0^2 + 2r_0\right) - (1-2r_0)^{3}\log(1-2r_0)^{3}
\end{dmath}
\begin{dmath}
    H_{[0,1]}^{>}(r_0) = -(3r_0^2-2r_0^3)\log(3r_0^2-2r_0^3) - (4r_0^3-(3r_0-1)^2)\log\left(\frac{4}{3}r_0^3-\frac{1}{3}(3r_0-1)^2\right) - (2(1-r_0)^{3})\log\left(\frac{2}{3}(1-r_0)^{3}\right)
\end{dmath}
we have
\begin{equation}
    H(\mathcal{G}) =
    \begin{cases}
        H_{[0,1]}^<(r_0) & 0 \leq r_0 < \frac{1}{2} \\
        H_{[0,1]}^>(r_0) & 1 \geq r_0 \geq \frac{1}{2} \\
        0 & \text{otherwise}
    \end{cases}
\end{equation}
Numerically evaluating the maximum of this function gives $\hat{r}_0 = 0.283$ and $\bar{p}_{max} = 0.486$. Notice that $\bar{p}_{max} \neq \frac{1}{2}$. The exact entropy curves for 3-node hard RGGs in $\mathbb{T}^1$ and $[0,1]$ are shown in Figure \ref{fig:exact}.

\begin{figure}
    \centering
    \includegraphics[width=0.85\linewidth]{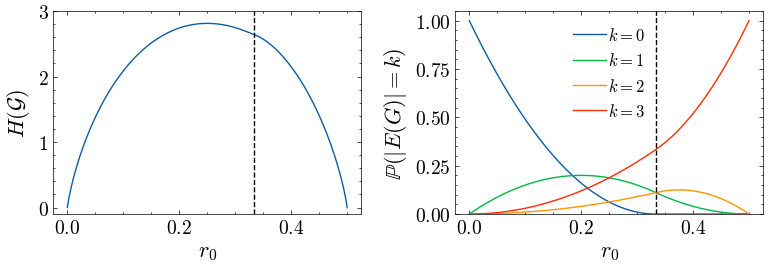}
    \includegraphics[width=0.85\linewidth]{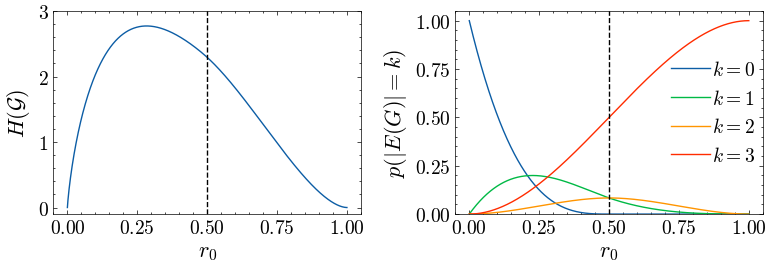}
    \caption{Exact entropy (left) and edge probability (right) curves in $r_0$ for 3-node hard RGGs with uniform node distributions in $\mathbb{T}^1$ (top) and $[0,1]$ (bottom). The dotted line shows the $r_0$ value where $p_0$ becomes 0, and where $p_1=p_2$.}
    \label{fig:exact}
\end{figure}

\subsection{Simulations for $d \in \{1,2,3\},$ $n=3$}
In this section we perform Monte-Carlo estimates of the entropy for 3 node hard and soft RGGs in $d=1,2$ and $3$. This will allow us to qualitatively compare the maximum entropy distributions for different types of RGG. For each combination of connection function and geometry, we generate $L=10^8$ graphs, and estimate the entropy of the ensemble by counting how many of each graph structure appear. Accurately estimating the entropy of SRGG ensembles is a particularly difficult task, due to the computational complexity. The value of $L$ required to accurately estimate the entropy must increase significantly as $n$ increases. If we enumerate the graphs in $\mathcal{G}$ as $\{g_1,g_2,...,g_{2^{\binom{n}{2}}}\}$, denote the estimated probability of each graph as $p_i$, and the associated entropy estimate as $\tilde{H}(\mathcal{G}) = -\sum_{i} p_i\log p_i$, then the relationship between the true entropy and the estimated entropy can be expressed as  \cite{roulston1999estimating} .
\begin{equation}
    \label{eq:error_decomposition}
    H(\mathcal{G}) = \tilde{H}(\mathcal{G}) + E_{total}
\end{equation}
\begin{equation}
    E_{total} \approx E_{sys} + \sigma\xi
\end{equation}
where $\xi$ is a standard Gaussian distributed random variable and 
\begin{equation}
    \label{e_sys}
    E_{sys} =  \frac{2^{\binom{n}{2}}-1}{2L}
\end{equation}
is the systematic error, and
\begin{equation}
    \label{eq:standard_error}
    \sigma = \sqrt{\frac{1}{L}\sum_{i=1}^{2^{\binom{n}{2}}}(\log(p_i)+\tilde{H}(\mathcal{G}))^2p_i(1-p_i)}
\end{equation}
is the standard error.  This estimate is derived from writing the error between the true and estimated probabilities as $\epsilon_i$, and taking a Taylor expansion to second order. Because the mean of $\epsilon_i$ is 0, the expected difference in the observed and true entropies is approximated by the second order term in the Taylor expansion, which is given by (\ref{e_sys}). The variance is calculated in a similar manner \cite{roulston1999estimating}. 

This means, to add another node and maintain the same degree of error, we would need to multiply $L$ by $\mathcal{O}(2^n)$, which becomes infeasible very quickly. In practice, Monte-Carlo entropy estimates are effective up to $n \approx 7$. Beyond this the estimates are slow and unreliable. So for now, we will continue with small numbers of nodes to demonstrate the effects of geometry and connection function choice. \newline

Table \ref{tab:max_entropy} shows the maximum entropy (entropy evaluated at $\hat{r}_0$) for different geometries and connection functions. The maximum possible entropy is $3\log(2) = 3$. In order to estimate the maximum entropy, we first make an approximation of where the maximum should be by simulating the entropy at various values of $r_0$, then estimate $\hat{r}_0$ by simulating the entropy at a range of $r_0$ values evenly spaced around the approximate maximum. We then approximate the entropy curve by fitting a quadratic using least squares regression to calculate the maximum. The resulting value of $\hat{r}_0$ is calculating as the maximum of the quadratic fit. By setting $\tilde{H}(\mathcal{G})$ to the maximum value of the quadratic approximation in equations (\ref{eq:standard_error}) and (\ref{eq:error_decomposition}), we obtain an estimate on the error, which is small enough to be confident in the third decimal place of the entropy estimates.  A more detailed explanation of this precision, and the method used to obtain an estimate on the numerical error can be found in Appendix \ref{error}. For each reported number in the table, the error is less than $10^{-3}$. 

% \begin{table}[h]
%     \centering
%     \begin{tabular}{cccccccc}
%         \hline
%         Geometry & $\eta=1$ & $\eta=2$ & $\eta=3$ & $\eta=4$ & $\eta=5$ & $\eta=6$ & Hard  \\
%         \hline
%         $[0,1]$    & 0.502 & 0.509  & 0.508 & 0.503 & 0.499 & 0.497 &\textbf{0.486} \\
%         $\mathbb{T}^1$      & 0.501 & 0.509 & 0.517 & 0.518 & 0.515 & 0.512 & \textbf{0.5}  \\
%         $[0,1]^2$   & 0.500 & 0.502 & 0.504 & 0.504 & 0.505 & 0.504 & 0.503 \\
%         $\mathbb{T}^2$      & 0.500  & 0.501 & 0.502 & 0.504 &  0.504 & 0.504 & 0.504 \\
%         $[0,1]^3$     &  0.500 & 0.501 & 0.502 & 0.502 & 0.502 & 0.502 & 0.500 \\
%         $\mathbb{T}^3$ &  0.500 & 0.500 & 0.501 & 0.501 & 0.501 & 0.502 & 0.500  \\
%         \hline
%     \end{tabular}
%     \caption{$\bar{p}_{max}$ for $n=3$ in different geometries and connection functions  with uniform node distributions. Values in bold are calculated analytically.}
%     \label{tab:experimental_results}
% \end{table}
\begin{table}[h]
    \centering
    \begin{tabular}{cccccccc}
        \hline
        Geometry & $\eta=1$ & $\eta=2$ & $\eta=3$ & $\eta=4$ & $\eta=5$ & $\eta=6$ & Hard  \\
        \hline
        $[0,1]$    & 2.994 & 2.945 & 2.888 & 2.849 & 2.826 & 2.812 & \textbf{2.771} \\
        $\mathbb{T}^1$       & 2.998 & 2.982 & 2.929 & 2.893 & 2.870 & 2.854 & \textbf{2.812}  \\
        $[0,1]^2$   & 2.999 & 2.992 & 2.978 & 2.960 & 2.949 & 2.940 & 2.907 \\
        $\mathbb{T}^2$ & 2.999 & 2.998 & 2.994 & 2.988 & 2.983 & 2.978 & 2.962 \\
        $[0,1]^3$     & 2.999 & 2.998 & 2.991 & 2.983 & 2.976 & 2.969 & 2.942 \\
        $\mathbb{T}^3$ & 2.999 & 2.999 & 2.999 & 2.996 & 2.994 & 2.991 & 2.975  \\
        \hline
    \end{tabular}
    \caption{Maximum entropy for $n=3$ in different geometries and connection functions with uniform node distributions. Values in bold are calculated analytically.}
    \label{tab:max_entropy}
\end{table}
In the low-dimensional regime, we see patterns emerging which hint at the behaviour we observe in the high-dimensional regime. The first trend to notice is that the maximum entropy decreases as $\eta$ increases. Intuitively, this is due to the connection function becoming `harder', so there is less uncertainty in the edge set given a fixed embedding. We also see that, for a fixed value of $\eta$, the entropy increases with dimension in both the cubic and toroidal geometries, however in general, the entropy estimate in the $d$-dimensional torus is greater than in the $d$-dimensional cube. In later sections, we will see that this is because the dependence between distances is smaller in the torus than in the cube, so the maximum entropy distribution can tend to a uniform distribution on the graphs faster. \\

 For the soft RGGs, it is interesting that the ensembles are nearly maximally complex even in this low-dimensional regime. The hard RGG column is worth particular mention. For example, the entropy in $[0,1]^2$ is greater than in $\mathbb{T}^1$, but the entropy in $[0,1]^3$ is less than in $\mathbb{T}^2$. We will see later (Theorems \ref{thm:torus_is_ER} and \ref{thm:hard_cube_entropy}) that this is a due to the fact that the nodes are uniformly distributed. In this setting, the hard RGG on the torus converges to the ER ensemble as $d$ increases, but the RGG on the cube converges to an ensemble with lower entropy.

\section{RGG Entropy in High Dimensions}
In this section we consider hard RGGs in $\Omega = [0,1]^d$ and $\Omega = \mathbb{T}^d$ with a fixed $n$, and take the limit $d\tti$. We will restrict ourselves to node distributions of the form
\begin{equation}
    \label{eq:general_distribution}
    \nu(\vec{x}) = \prod_{i=1}^d \pi(x_i)
\end{equation}
where $\pi$ is a probability measure on $[0,1]$. That is, the coordinates of the points in $\Omega$ are i.i.d.  This is a standard assumption in the RGG literature, but we expect it is possible to extend some of our results to non-i.i.d. coordinate distributions. For example, if the coordinates are identically distributed but not independent, as long as a suitable central limit theorem applies. This is outside the scope of this work, but would certainly be an interesting future research direction. 

For clarity, Table \ref{tab:main_results} shows the main results of this section. 

\begin{table}[h]
    \centering
    \begin{tabular}{cccc} \hline
        Geometry & Connection Function & $G(n,p)$ as $d\tti$? & $H(\graph)$ as $d\tti$ \\ \hline
        $\mathbb{T}^d$ - uniform nodes & Hard & \checkmark (Thm. \ref{thm:torus_entropy}) & $=H(G(n,p))$ \\ 
        $\mathbb{T}^d$ - non-uniform nodes & Hard & $\times$ (Thm \ref{no convergence for gaussian in torus thm}) & $<H(G(n,p))$ \\
        $\mathbb{T}^d$ & Soft & \checkmark (Thm. \ref{thm:soft_torus_entropy}) & $=H(G(n,p))$ \\ 
        $[0,1]^d$ & Hard & $\times$ (Thm. \ref{thm:hard_cube_entropy}) & $ < H(G(n,p))$ \\ 
        $[0,1]^d$ & Soft & \checkmark (Sec. IV of \cite{erba2020random}) & $=H(G(n,p))$ \\ \hline
    \end{tabular}
    \caption{Main results of Section 4. For each geometry and connection function, does the ensemble converge to the Erd\H os-R\'enyi ensemble, and is it possible to achieve the maximum entropy distribution?}
    \label{tab:main_results}
\end{table}

Clearly, if the ensemble converges to $G(n,p)$ as $d\tti$, then the entropy will converge to the entropy of $G(n,p)$ as well, hence it follows from the result of \cite{erba2020random} that the entropy of soft RGGs in $[0,1]^d$ tends to the entropy of $G(n,p)$ as $d\tti$, so we do not prove that explicitly here. In order for the distribution to converge to $G(n,1/2)$ as $d\tti$, we require that the connection function $p(r/r_0)$ must be able to converge to the value of $\frac{1}{2}$ as $d\tti$. That is, there exists a function $r_0(d)$ such that $\lim_{d\tti} p(r/r_0) = \frac{1}{2}$.  For continuous connection functions this is the case, as setting $r_0$ to a constant times $\sqrt{d}$ allows us to tune the connection probability parameter as we wish (see Theorem \ref{thm:soft_torus_entropy}). \\

For the hard RGG in $\mathbb{T}^d$ with uniform nodes, we can tune the parameters to achieve the maximum entropy distribution $G(n,1/2)$, but this is not the case for any other density of the form (\ref{eq:general_distribution}) in $\mathbb{T}^d$. We note that in \cite{erba2020random} it is shown that the hard RGG in $[0,1]^d$ does not converge to the ER ensemble for uniformly distributed nodes, but here we add to the result by showing that this occurs for \textit{any} distribution of nodes of the form (\ref{eq:general_distribution}), provided that the kurtosis of the variables in each coordinate is greater than 1. This is the case for any distribution, except the Bernoulli distribution with parameter $1/2$, or when the coordinates are almost surely constant (Appendix B). The reason that the hard and soft RGGs in the cube behave differently in the high-dimensional limit is that the soft connection function removes the dependence on the distance as $d\tti$, but the deterministic nature of the hard connection function alters the domain of integration which preserves the dependence.

\subsection{Central Limit Theorem for High Dimensional Cubes}
In this section, we will present central limit theorems for distances in high-dimensional cubes.  In reference \cite{erba2020random}, it was shown that soft RGGs on the cube converge to the ER ensemble. Also, the hard RGG in $[0,1]^d$ with nodes distributed with density of the form (\ref{eq:general_distribution}) \textit{does not necessarily} converge to the ER ensemble. In this work, we provide the necessary and sufficient condition for such convergence to occur. For these results, we need the multivariate central limit theorem, which is a consequence of the standard central limit theorem and the Cram\'er-Wold device.
% \begin{theorem}[Multivariate Central Limit Theorem \cite{vaart1998aymptotics}]
%     Let $Y_1,...Y_d$ be i.i.d. random vectors in $\mathbb{R}^k$ with mean $\mu$ and covariance matrix $\Sigma = \mathbb{E}[(Y_1-\mu)(Y_1-\mu)^T]$. Then,
%     \begin{equation}
%         \frac{1}{\sqrt{d}}\sum_{i=1}^d (Y_i - \mu) \rightarrow Z
%     \end{equation}
%     in distribution as $d\tti$. Here $Z \sim N(0_k, \Sigma)$ is a Gaussian random vector, and $0_k$ is the $k$-dimensional zero vector.
% \end{theorem}
We will now set up the central limit theorem for high-dimensional distance in cubes.
\begin{definition}[Normalised Euclidean Distance]
    \label{def:cube_distances}
    Suppose two points $X_i = (X_i^1,...X_i^d)$ and $X_j = (X_j^1,...,X_j^d)$ are independent and identically distributed according to a probability measure of the form (\ref{eq:general_distribution}) in $[0,1]^d$. Define the coordinate-wise  normalised squared distance  as
    \begin{equation}
        q_{ij}^k := |X_i^k-X_j^k|^2 - \mu_c
    \end{equation}
    where $\mu_c = \int_{{[0,1]^2}} |x-y|^2 \pi(x)\pi(y) dxdy$ is the average squared distance between two points in $[0,1]$. The  normalised squared distance  between $X_i$ and $X_j$ is
    \begin{equation}
        q_{ij} := \frac{1}{\sqrt{d}}\sum_{k=1}^d q_{ij}^k
    \end{equation}
    We let $\boldsymbol{q}^k$ collect the coordinate-wise pair distances,
    \begin{equation}
        \boldsymbol{q}^k := (q_{ij}^k)_{1\leq i < j \leq n}
    \end{equation}
    where $n$ is the number of nodes in the graph. Finally, we define the normalised connection range $t$ for an ensemble with connection range $r_0$ in the high dimension limit as
    \begin{equation}
        t = \lim_{d\tti} \frac{r_0^2}{\sqrt{d}}-\mu_c\sqrt{d}
    \end{equation}
\end{definition}
Using this notation, we are able to present the main result from \cite{erba2020random}, where we consider the specific case of the Euclidean norm.

\begin{lemma}[\cite{erba2020random}]
    \label{thm:cube_clt}
    Let $X_1,...,X_n$ be i.i.d. points distributed according to $\nu$ of the form (\ref{eq:general_distribution}) in $[0,1]^d$. Then in the limit $d\tti$, their  normalised squared distances  satisfy
    \begin{equation}
        \boldsymbol{q} := \frac{1}{\sqrt{d}} \sum_{i=1}^d \boldsymbol{q}^k \rightarrow Z
    \end{equation}
    in distribution, where $Z \sim N(0_{\binom{n}{2}}, \Sigma_c)$ is a Gaussian random vector, and $\Sigma_c = \{\mathbb{E}[|q_{ij}-\mu_c||q_{kl}-\mu_c|]\}_{1\leq i<j\leq n, 1\leq k<l \leq n}$ is the covariance matrix of  normalised squared distances  in the cube.
\end{lemma}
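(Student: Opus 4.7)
The plan is to recognise that $\boldsymbol{q}$ is an appropriately normalised sum of $d$ i.i.d.\ random vectors and apply the multivariate central limit theorem. Because $\nu$ has the product form (\ref{eq:general_distribution}), the tuples $(X_1^k,\dots,X_n^k)$ are mutually independent across the coordinate index $k$ and identically distributed. Hence the vectors $\boldsymbol{q}^1,\dots,\boldsymbol{q}^d \in \mathbb{R}^{\binom{n}{2}}$ are i.i.d., even though the entries within a single $\boldsymbol{q}^k$ are dependent: pairs $(i,j)$ and $(i,l)$ share the coordinate sample $X_i^k$.

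With that structural observation in hand I would carry out three routine checks. First, each $\boldsymbol{q}^k$ has mean zero, straight from the definition $q_{ij}^k = |X_i^k - X_j^k|^2 - \mu_c$ and $\mu_c = \mathbb{E}|X_i^k - X_j^k|^2$. Second, each entry lies in $[-\mu_c,\,1-\mu_c]$ since $X_i^k \in [0,1]$, so all second moments exist and the covariance matrix $\Sigma_c$, with entries $(\Sigma_c)_{(ij),(kl)} = \mathbb{E}[q_{ij}^1\, q_{kl}^1]$, is well-defined and finite. Third, I would invoke the Cram\'er-Wold device: for any fixed $\boldsymbol{a} \in \mathbb{R}^{\binom{n}{2}}$, the scalar variables $\boldsymbol{a}^\top \boldsymbol{q}^k$ are i.i.d.\ in $k$ with mean zero and finite variance $\boldsymbol{a}^\top \Sigma_c \boldsymbol{a}$, so the classical univariate CLT gives
\begin{equation}
\boldsymbol{a}^\top \boldsymbol{q} \;=\; \frac{1}{\sqrt{d}}\sum_{k=1}^d \boldsymbol{a}^\top \boldsymbol{q}^k \;\to\; N\bigl(0,\; \boldsymbol{a}^\top \Sigma_c \boldsymbol{a}\bigr)
\end{equation}
in distribution as $d\tti$. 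Since this holds for every $\boldsymbol{a}$, Cram\'er-Wold yields $\boldsymbol{q} \to N(\boldsymbol{0}_{\binom{n}{2}},\Sigma_c)$ in distribution, which is the claim.

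The main obstacle is not the convergence argument, which is standard, but tracking exactly which entries of $\Sigma_c$ vanish and which do not, since this dependence structure drives all of the subsequent results. When the pairs $(i,j)$ and $(k,l)$ are disjoint, $q_{ij}^1$ and $q_{kl}^1$ involve four independent coordinate samples and the covariance is zero; when they share one index (the $K_{2,1,1}$-type overlap), the covariance reduces to third- and fourth-moment integrals against $\pi$ and is generically nonzero. This bookkeeping is what makes $\Sigma_c$ non-diagonal in the cube and, as the authors exploit in subsequent theorems, is precisely what prevents the limiting law from factorising into independent Gaussian distances and thereby obstructs convergence of the hard RGG to $G(n,p)$.
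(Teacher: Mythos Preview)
Your proposal is correct and follows essentially the same route as the paper: the authors simply observe that the $\boldsymbol{q}^k$ are i.i.d.\ zero-mean vectors in $\mathbb{R}^{\binom{n}{2}}$ (by the product form of $\nu$) and invoke the multivariate CLT, having already noted that this is a consequence of the univariate CLT plus Cram\'er--Wold. Your version is more carefully spelled out (the boundedness/finite-moment check and the explicit Cram\'er--Wold reduction), and your closing remarks on which entries of $\Sigma_c$ vanish are correct and indeed anticipate the structure used in Theorems~\ref{thm:torus_entropy} and~\ref{thm:hard_cube_entropy}.
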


\begin{proof}
    Because the coordinates of a point distributed in $[0,1]^d$ are i.i.d., it is clear that the coordinate-wise pair distances is are also i.i.d. They are vectors distributed in $\mathbb{R}^{\binom{n}{2}}$ with zero mean, and therefore by the multivariate central limit theorem, their sum converges to the multivariate Gaussian. 
\end{proof}

\begin{remark}
    When the nodes are distributed uniformly at random in $[0,1]^d$, $\Sigma_c$ is given by $\Sigma_c^{(i,j),(k,l)} = \frac{7}{180}$ if $(i,j) = (k,l)$, $\Sigma_c^{(i,j),(k,l)} = \frac{1}{180}$ if $(i,j)$ and $(k,l)$ have exactly one element in common, and is 0 elsewhere. For this setting, $\mu_c=\frac{1}{6}$. These values are computed in Section \ref{sec:uniform}.
\end{remark}

\subsection{Central Limit Theorem for High-Dimensional Tori}
Here we will follow the same structure as the previous section, but this time replacing the cube with the torus. First, we will reformulate Definition \ref{def:cube_distances}.
\begin{definition}(Normalised Torus Distance)
    \label{def:torus_distances}
    Suppose two points $X_i = (X_i^1,...X_i^d)$ and $X_j = (X_j^1,...,X_j^d)$ are independent and identically distributed according to a probability measure of the form (\ref{eq:general_distribution}) in $\mathbb{T}^d$. Define the coordinate-wise  normalised squared distance  as
    \begin{equation}
        \tau_{ij}^k := \rho_T(X_i^k,X_j^k)^2 - \mu_T
    \end{equation}
    where $\mu_T = \int_{[0,1]^2} \rho_T(x,y)^2 \pi(x)\pi(y)dxdy$ is the average squared distance between two points in $\mathbb{T}^1$. The  normalised squared distance  between $X_i$ and $X_j$ is
    \begin{equation}
        \tau_{ij} := \frac{1}{\sqrt{d}}\sum_{k=1}^d \tau_{ij}^k
    \end{equation}
    We let $\boldsymbol{\tau}^k$ collect the coordinate-wise pair distances,
    \begin{equation}
        \boldsymbol{\tau}^k := (\tau_{ij}^k)_{1\leq i < j \leq n}
    \end{equation}
    where $n$ is the number of nodes in the graph. Finally, we define the normalised connection range $t$ for an ensemble with connection range $r_0$ in the high dimension limit as
    \begin{equation}
        t = \lim_{d\tti} \frac{r_0^2}{\sqrt{d}}-\mu_T\sqrt{d}
    \end{equation}
\end{definition}
\begin{lemma}
    \label{thm:torus_clt}
    Let $X_1,...,X_n$ be i.i.d. points distributed according to $\nu$ of the form (\ref{eq:general_distribution}) in $\mathbb{T}^d$. Then in the limit $d\tti$, their  normalised squared distances  satisfy
    \begin{equation}
        \boldsymbol{\tau} := \frac{1}{\sqrt{d}} \sum_{i=1}^d \boldsymbol{\tau}^k \rightarrow Z
    \end{equation}
    in distribution, where $Z \sim N(0_{\binom{n}{2}}, \Sigma_T)$ is a Gaussian random vector and $\Sigma_T = \{\mathbb{E}[(\tau_{ij}-\mu_T)(\tau_{kl}-\mu_T)]\}_{1\leq i<j\leq n, 1\leq k<l \leq n}$ is the covariance matrix of  normalised squared distances  in the torus.
\end{lemma}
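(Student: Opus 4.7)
The statement is the direct torus analogue of Lemma \ref{thm:cube_clt}, and the strategy is identical: apply the multivariate central limit theorem to the sequence of $\binom{n}{2}$-dimensional random vectors $\boldsymbol{\tau}^1,\boldsymbol{\tau}^2,\ldots$, so the real work is just to verify its three hypotheses (independence across $k$, zero mean, finite covariance) in the toroidal setting.

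First I would establish independence across coordinates. The product form of $\nu$ in (\ref{eq:general_distribution}) guarantees that the $d$ coordinate samples $(X_1^k,\ldots,X_n^k)$, $k=1,\ldots,d$, are mutually independent and identically distributed according to $\pi^{\otimes n}$. Since $\boldsymbol{\tau}^k$ is a measurable function of $(X_1^k,\ldots,X_n^k)$ alone, the vectors $\{\boldsymbol{\tau}^k\}_{k=1}^d$ form an i.i.d.\ sequence in $\mathbb{R}^{\binom{n}{2}}$. It is worth emphasising that the entries within a single $\boldsymbol{\tau}^k$ are typically dependent, because pair distances that share a common node share a coordinate value; this is precisely what produces the off-diagonal entries in $\Sigma_T$.

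Next I would verify the moment conditions. Zero mean is immediate from the definition of $\mu_T$: $\mathbb{E}[\tau_{ij}^k] = \mathbb{E}[\rho_T(X_i^k,X_j^k)^2] - \mu_T = 0$. For finiteness of the covariance, note that $\rho_T(x,y) \leq 1/2$ for all $x,y \in \mathbb{T}^1$, so each $\tau_{ij}^k$ is bounded in absolute value by $1/4$, and consequently all second moments of $\boldsymbol{\tau}^k$ are finite; the common covariance matrix of a single summand is then precisely the matrix $\Sigma_T$ stated in the lemma (and, because the $\boldsymbol{\tau}^k$ are i.i.d.\ with zero mean, it also equals the covariance of the normalised sum $\boldsymbol{\tau}$).

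With these ingredients in place, the multivariate central limit theorem, obtained by combining the classical one-dimensional CLT with the Cram\'er--Wold device applied to the i.i.d.\ sequence $\boldsymbol{\tau}^k$, yields $d^{-1/2}\sum_{k=1}^d \boldsymbol{\tau}^k \to N(0_{\binom{n}{2}}, \Sigma_T)$ in distribution as $d\tti$. I do not anticipate a genuine obstacle: the only point that needs a second glance is whether the wrap-around in $\rho_T$ damages the product structure, but because the metric in (\ref{eq:torus_metric}) acts coordinate by coordinate and the squared distance is additive across coordinates, the decomposition into i.i.d.\ summands is preserved verbatim from the cube case. Computing explicit entries of $\Sigma_T$ for a specific $\pi$ (for example the uniform law) is a separate routine integration to be handled after the lemma rather than inside its proof.
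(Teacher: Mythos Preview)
Your proposal is correct and follows exactly the paper's approach: the paper's proof simply says the argument is ``practically identical'' to the cube case, observing that the $\boldsymbol{\tau}^k$ are i.i.d.\ zero-mean vectors in $\mathbb{R}^{\binom{n}{2}}$ and invoking the multivariate CLT. Your version is a more careful expansion of the same idea, explicitly checking boundedness for finite covariance and noting that the coordinate-wise form of $\rho_T$ preserves the i.i.d.\ structure, but there is no difference in strategy.
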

\begin{proof}
    The proof is practically identical to that of Theorem \ref{thm:cube_clt}. By the same reasoning as in that proof, the vectors $\{\boldsymbol{\tau}^k\}_{k=1}^d$ satisfy the conditions of the multivariate CLT, and therefore tend to a multivariate Gaussian.
\end{proof}

\begin{remark}
    For uniform distributed points in $\mathbb{T}^d$, $\Sigma_T$ is given by $\Sigma_T^{(i,j),(k,l)} = \frac{1}{180}$ if $(i,j) = (k,l)$, and is 0 everywhere else. That is, the limiting distribution is the product distribution of $\binom{n}{2}$ independent normal random variables with mean 0 and variance $\frac{1}{180}$. In this setting, $\mu_T = \frac{1}{12}$. Again, see Section \ref{sec:uniform} for further details.
\end{remark}

\subsection{Entropy of Hard and Soft RGG Ensembles in High Dimensional Tori}
\subsubsection{Entropy of Hard RGG Ensembles in High Dimensional Tori}

 In this section, we will first show that if the nodes are independently and \textit{uniformly} distributed in each coordinate, then the ensemble of hard RGGs in the torus converges in distribution to the ER ensemble, which allows us to easily compute the entropy. We note that this is a known result, for example in \cite{friedrich2023cliques, baguley2025testing}, the same result is established for $n$ growing to infinity with $d$, but here we \textit{fix} $n$, and take $d\tti$. The proof of this result also serves to introduce notation and motivate the later results.
\begin{theorem}
    \label{thm:torus_entropy}
    Let $\mathcal{G}$ be an ensemble of $n$-node hard RGGs in $\mathbb{T}^d$, with independent, uniformly distributed nodes in each coordinate. Then as $d\tti$, $\graph \rightarrow G(n,\bar{p}(t))$ in distribution, where
    \begin{equation}
        \label{eq:gaussian_cdf}
        \bar{p}(t) = \prob{Z < t}
    \end{equation}
    where $Z \sim N(0, \alpha)$ is a Gaussian random variable with zero mean and variance $\alpha = \mathbb{E}[(\rho_T(X_i^1,X_j^1)^2-\mu_T)^2]$. 
\end{theorem}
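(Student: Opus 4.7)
The plan is to rewrite the edge-inclusion condition in terms of the normalised squared distances $\tau_{ij}$ from Definition \ref{def:torus_distances}, invoke the multivariate CLT of Lemma \ref{thm:torus_clt}, and then exploit the fact that $\Sigma_T$ is diagonal in the uniform case (the remark following Lemma \ref{thm:torus_clt}) to obtain asymptotic independence of the edges.

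First I would note that edge $(i,j)$ is present exactly when $\sum_{k=1}^d \rho_T(X_i^k, X_j^k)^2 \leq r_0^2$; subtracting $d\mu_T$ and dividing by $\sqrt{d}$ rewrites this as $\tau_{ij} \leq t_d$ with $t_d := r_0^2/\sqrt{d} - \mu_T\sqrt{d} \tto t$ as $d\tti$. Hence for any labelled graph $g$ with edge indicators $(x_{ij})_{i<j}$,
\begin{equation}
    \mathbb{P}_d(g) = \mathbb{P}\left(\bigcap_{i<j,\, x_{ij}=1} \{\tau_{ij} \leq t_d\} \cap \bigcap_{i<j,\, x_{ij}=0} \{\tau_{ij} > t_d\}\right).
\end{equation}
Lemma \ref{thm:torus_clt} gives $\boldsymbol{\tau} \tto Z \sim N(0_{\binom{n}{2}}, \Sigma_T)$ in distribution, and for uniform nodes the remark forces $\Sigma_T = \alpha I$ with $\alpha = 1/180$, so the limiting components $Z_{ij}$ are independent $N(0,\alpha)$ random variables.

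It remains to pass the convergence through the indicators $\mathbb{I}\{\tau_{ij} \leq t_d\}$, which are discontinuous but whose jump set $\{\tau_{ij} = t\}$ has zero probability under the absolutely continuous Gaussian limit. Applying the Portmanteau theorem jointly across the $\binom{n}{2}$ coordinates, and handling the drifting threshold $t_d \tto t$ by sandwiching between $t \pm \varepsilon$ and letting $\varepsilon \ttz$, yields
\begin{equation}
    \mathbb{P}_d(g) \tto \prod_{i<j} \mathbb{P}(Z_{ij} \leq t)^{x_{ij}} \mathbb{P}(Z_{ij} > t)^{1-x_{ij}} = \bar{p}(t)^{k(g)} (1-\bar{p}(t))^{\binom{n}{2} - k(g)},
\end{equation}
where $k(g)$ is the edge count of $g$. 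This is precisely the probability of $g$ under $G(n,\bar{p}(t))$, so $\graph \tto G(n,\bar{p}(t))$ in distribution, as required.

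The only delicate point, though mild, is this passage through the discontinuous indicators with a drifting threshold; once justified the conclusion follows immediately from the diagonality of $\Sigma_T$. It is precisely this diagonality that fails in Theorem \ref{no convergence for gaussian in torus thm} and in the non-uniform cube setting of Theorem \ref{thm:hard_cube_entropy}, which is what prevents convergence to the ER ensemble in those cases and signals where the later proofs will have to work harder.
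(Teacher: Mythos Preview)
Your proposal is correct and follows essentially the same route as the paper: rewrite the edge events in terms of the normalised $\tau_{ij}$, apply Lemma~\ref{thm:torus_clt}, and factorise using the diagonality of $\Sigma_T$. The only difference is that the paper proves $\Sigma_T$ is diagonal within the proof itself (computing $\beta=0$ via the translation invariance of the torus) rather than citing the remark, and conversely it is less explicit than you are about the Portmanteau/drifting-threshold step.
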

\begin{proof}
    The proof involves calculating the elements of the covariance matrix $\Sigma_T$. There are three distinct values of the elements of $\Sigma_T$. The diagonal elements are given by
    \begin{equation}
        \alpha = \mathbb{E}[(\rho_T(X_i^1,X_j^1)^2-\mu_T)^2]
    \end{equation}
    where  $X_i, X_j$ are random points in $\mathbb{T}^d$ distributed according to $\nu$, and so $X_i^1$ and $X_j^1$ are independent random points distributed on $\mathbb{T}^1$ according to $\pi$.  The quantity $\alpha$ will be non-negative as it is the expectation of a square. The off diagonal elements are given by either
    \begin{equation}
        \beta = \Sigma_T^{(i,j),(j,k)} = \mathbb{E}[(\rho_T(X_i^1,X_j^1)^2-\mu_T)(\rho_T(X_j^1,X_k^1)^2-\mu_T)]
    \end{equation}
    or 
    \begin{equation}
        \gamma = \Sigma_T^{(i,j),(k,l)} = \mathbb{E}[(\rho_T(X_i^1,X_j^1)^2-\mu_T)(\rho_T(X_k^1, X_l^1)^2-\mu_T)] = 0
    \end{equation}
    by the definition of $\mu_T$. So we focus on $\beta$,
    \begin{equation}
        \beta = \int_{[0,1]^3} \rho_T(x_i,x_j)^2\rho_T(x_j,x_k)^2 dx_idx_jdx_k - \mu_T^2
    \end{equation}
    \begin{equation}
        = \int_{[0,1]^2} \rho_T(x_i,0)^2\rho_T(0,x_k)^2 dx_idx_k - \mu_T^2
    \end{equation}
    \begin{equation}
        =\mu_T^2-\mu_T^2 = 0
    \end{equation}
    Where in the second step we have used the translation invariance property of the $d$-torus with uniformly distributed nodes. So, $\Sigma_T$ is a diagonal matrix. \newline

    Next, we note that as a consequence of the central limit theorem for torus distances, we have that the probability of a graph given by (\ref{eq:graph_probability}) converges to the integral of the Gaussian density that is the limiting density of the pair distances as $d\tti$. Specifically, let $g \in \mathcal{G}$, then
    \begin{equation}
        \label{eq:general_torus_prob}
        \prob{g} = \int_{[0,D]^{\binom{n}{2}}} f_{\vec{R}}(\vec{r})\prod_{1\leq i < j \leq n} p(r_{ij}/r_0)^{x_{ij}}(1-p(r_{ij}/r_0))^{1-x_{ij}} d\vec{r}
    \end{equation}
    
    \begin{equation}
        = \prob{\bigcap_{(i,j) \in E(g)} \{R_{ij} < r_0\} \cap \bigcap_{(i,j) \notin E(g)} \{R_{ij} > r_0\}}
    \end{equation}
    \begin{equation}
        = \prob{\bigcap_{(i,j) \in E(g)} \{R_{ij}^2 < r_0^2\} \cap \bigcap_{(i,j) \notin E(g)} \{R_{ij}^2 > r_0^2\}}
    \end{equation}
    where $\vec{R} = (R_{12},...,R_{ij},...,R_{(n-1)n})$ is a vector of random distances, and $E(g)$ is the edge set of $g$. Now since each $R_{ij}^2$ is itself a sum of i.i.d. random variables, as $d\tti$, after rescaling the distances by taking away $\mu_Td$ and dividing by $\sqrt{d}$, the central limit theorem (lemma \ref{thm:torus_clt}) gives
    \begin{equation}
        \prob{g} \rightarrow \prob{\vec{Z} \in \mathcal{A}}
    \end{equation}
    \begin{equation}
        \label{eq:torus_joint_gaussian}
        = \int_{\mathcal{A}} N(0_{\binom{n}{2}},\Sigma_T)(\vec{z})d\vec{z}
    \end{equation}
    where $\mathcal{A}$ is given by
    \begin{equation}
        \mathcal{A} := \bigotimes_{1 \leq i < j \leq n} A_{ij}
    \end{equation}
    with $\bigotimes$ being the Cartesian product of sets, and
    \begin{equation}
        A_{ij} := \begin{cases}
            (-\infty, t], & x_{ij}=1 \\
            (t, \infty), & x_{ij}=0
        \end{cases}
    \end{equation}
    defining the region of $\mathbb{R}^{\binom{n}{2}}$ where the edge exists, with $t= \lim_{d\tti} d^{-\frac{1}{2}}(r_0^2 - \mu_Td)$ is the normalised connection range (when it exists).  Now, since $\Sigma_T$ is diagonal, 
    \begin{equation}
        \int_{\mathcal{A}} N(0_{\binom{n}{2}},\Sigma_T)(\vec{z})d\vec{z} = \prod_{1\leq i<j \leq n} \int_{A_{ij}} N(0,\alpha)(z)dz
    \end{equation}
    \begin{equation}
        = \prod_{1\leq i < j \leq n} \bar{p}(t)^{x_{ij}}(1-\bar{p}(t))^{1-x_{ij}}
    \end{equation}
    \begin{equation}
        = \bar{p}(t)^k(1-\bar{p}(t))^{\binom{n}{2}-k}
    \end{equation} 
    where $k = \sum_{i<j} x_{ij}$ is the number of edges in $g$. This is exactly the expression for the probability of a specific graph with $k$ edges in the ER ensemble, and therefore $\mathcal{G} \rightarrow G(n,\bar{p}(t))$ in distribution as $d\tti$.
\end{proof}
\begin{corollary}
    \label{thm:torus_is_ER}
    Let $\mathcal{G}$ be an ensemble of hard RGGs in $\mathbb{T}^d$ with independent, uniformly distributed nodes in each coordinate. Then
    \begin{equation}
        \lim_{d\tti} H(\mathcal{G}) = \binom{n}{2}h_2(\bar{p}(t))
    \end{equation}
    with $\bar{p} = \prob{Z<t}$ is the CDF of $Z \sim N(0, \alpha)$ with $\alpha$ as the diagonal element of $\Sigma_T$, and $t$ being the normalised connection range. Further, the entropy of $\graph$ is maximised in $r_0$ when $t=0$.
\end{corollary}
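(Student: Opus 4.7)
The plan is to get the entropy formula essentially for free from Theorem~\ref{thm:torus_entropy}, and then to identify the maximiser by exploiting the symmetry of the limiting Gaussian together with the concavity of $h_2$.

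First I would observe that the set of labelled graphs on $n$ nodes is \emph{finite}, of cardinality $2^{\binom{n}{2}}$. Theorem~\ref{thm:torus_entropy} shows that for every labelled graph $g$, $\mathbb{P}_d(g) \to \bar p(t)^{k(g)}(1-\bar p(t))^{\binom{n}{2}-k(g)}$ as $d\tti$, where $k(g)$ is the number of edges of $g$. Because the function $x \mapsto -x \log x$ is continuous on $[0,1]$ (with the usual convention $0\log 0 = 0$), termwise convergence of a finite sum immediately gives
\begin{equation}
    H(\graph) = -\sum_{g} \mathbb{P}_d(g)\log \mathbb{P}_d(g) \;\longrightarrow\; -\sum_{g} \bar p(t)^{k(g)}(1-\bar p(t))^{\binom{n}{2}-k(g)}\log\!\Big(\bar p(t)^{k(g)}(1-\bar p(t))^{\binom{n}{2}-k(g)}\Big).
\end{equation}
Expanding the logarithm and using the fact that the total number of edges summed over all labelled graphs of $n$ nodes (weighted by the product Bernoulli law) has mean $\binom{n}{2}\bar p(t)$, the right hand side collapses to $\binom{n}{2}h_2(\bar p(t))$, which is the standard ER entropy formula. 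This is precisely the first assertion of the corollary.

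For the second assertion, I would argue as follows. The limit $\bar p(t) = \mathbb{P}(Z<t)$ is the cumulative distribution function of a zero-mean Gaussian with variance $\alpha>0$, so it is strictly increasing in $t$ and continuous, taking the value $\tfrac{1}{2}$ exactly at $t=0$. Since $h_2(p)$ is strictly concave on $[0,1]$ with unique maximum at $p=\tfrac{1}{2}$, the composition $t\mapsto \binom{n}{2}h_2(\bar p(t))$ is maximised precisely where $\bar p(t)=\tfrac{1}{2}$, i.e.\ at $t=0$. Finally, the normalised connection range $t = \lim_{d\tti} d^{-1/2}(r_0^2 - \mu_T d)$ can be set to any prescribed real value by choosing $r_0 = r_0(d)$ with $r_0^2 = \mu_T d + t\sqrt{d}$; in particular $t=0$ corresponds to $r_0 = \sqrt{\mu_T d}$.

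I do not expect any serious obstacle here: the only subtlety is justifying the interchange of limit and sum when passing from $\mathbb{P}_d(g)\log \mathbb{P}_d(g)$ to its limit, which is trivial because the sum is over a fixed finite index set and each summand converges. The rest is the familiar ER calculation plus an appeal to the strict monotonicity of the Gaussian CDF and the strict concavity of $h_2$.
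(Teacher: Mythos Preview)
Your proposal is correct and follows essentially the same route as the paper: invoke Theorem~\ref{thm:torus_entropy} to get convergence to $G(n,\bar p(t))$, read off the ER entropy $\binom{n}{2}h_2(\bar p(t))$, and then locate the maximum at $t=0$ via $\bar p(0)=\tfrac12$. The paper's own proof is much terser (it simply asserts that the ER entropy is known and that symmetry of the Gaussian gives $t=0$), whereas you spell out the finite-sum/continuity justification for passing the limit through $H$ and the strict concavity/monotonicity argument for uniqueness of the maximiser; these are welcome clarifications but not a different method.
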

\begin{proof}
    Finding the limit of the entropy is simple, as we just need to find the entropy of the ER ensemble $G(n,\bar{p})$, which is known to be $\binom{n}{2}h_2(\bar{p})$. It is also known that the entropy of the ER ensemble is maximised at $\bar{p}(t)=1/2$, and so we need $t=0$ due to the symmetry of the Gaussian density. 
\end{proof}
\begin{remark}
    If we choose any $r_0(d)$ such that $\lim_{d\tti} \frac{r_0^2}{\sqrt{d}}-\mu_T\sqrt{d} = 0$, that is, $t = 0$, then we will maximise the entropy.  This tells us that the entropy-maximising choice of $r_0$ should scale as the square root of the dimension (at the same rate as the maximum distance of the domain), and the leading order of the scaling should be the square root of the mean squared distance in the domain.
\end{remark} 
\subsubsection{Non-Convergence to $G(n,p)$ for Non-Uniform Node Distributions on $\mathbb{T}^d$}
\label{no convergence for gaussian in torus}
\begin{theorem}
\label{no convergence for gaussian in torus thm}
    Suppose the ensemble $\graph$ of $n$ node hard RGGs on the $\mathbb{T}^d$ has nodes distributed according to a distribution of the form (\ref{eq:general_distribution}). Then $\graph$ converges in distribution to $G(n,p)$ as $d\tti$ if and only if the distribution of nodes in each coordinate is uniform.
\end{theorem}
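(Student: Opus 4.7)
By Lemma \ref{thm:torus_clt}, the vector $\boldsymbol{\tau}$ of normalised squared pair-distances converges in distribution to $Z \sim N(0_{\binom{n}{2}}, \Sigma_T)$, and exactly as in the proof of Theorem \ref{thm:torus_entropy} the limiting probability of any labelled graph $g$ becomes the orthant probability $\mathbb{P}(Z \in \mathcal A_g)$ with threshold $t$ on each coordinate, the sign of the inequality determined by whether the edge lies in $g$. My plan is first to reduce convergence to $G(n,p)$ to the diagonality of $\Sigma_T$, and then to reduce that diagonality condition to a Fourier statement on $\pi$. If $\Sigma_T = \alpha I$ the orthant probability factorises as $\bar p(t)^k(1-\bar p(t))^{\binom{n}{2}-k}$ and the limit is $G(n, \bar p(t))$, just as before. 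If instead some off-diagonal entry $\sigma$ of $\Sigma_T$ is non-zero, then the corresponding pair $(Z_{ij}, Z_{kl})$ is bivariate Gaussian with correlation $\rho = \sigma/\alpha \neq 0$; the bivariate Gaussian orthant identity $\mathbb{P}(Z_{ij}<0, Z_{kl}<0) = \tfrac 1 4 + \tfrac{\arcsin \rho}{2\pi}$ shows that choosing $r_0(d)$ so that $t = 0$ already produces limiting graph probabilities which cannot equal $\bar p^k(1-\bar p)^{\binom{n}{2}-k}$ for any $\bar p$, so $\graph$ cannot converge to any $G(n,p)$.

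Next I analyse the off-diagonal entries of $\Sigma_T$. Those indexed by vertex-disjoint pairs vanish by independence regardless of $\pi$, so the only case left is the entry
\begin{equation*}
    \beta := \mathbb{E}\left[(\rho_T(X_i^1, X_j^1)^2 - \mu_T)(\rho_T(X_j^1, X_k^1)^2 - \mu_T)\right].
\end{equation*}
Conditioning on $X_j^1 = y$ and using independence of $X_i^1, X_k^1$ from $y$ rewrites this as $\beta = \mathrm{Var}_{y \sim \pi}(g(y))$, where
\begin{equation*}
    g(y) := \int_0^1 \rho_T(x, y)^2 \, \pi(dx) = (\phi \ast \pi)(y), \qquad \phi(u) := \rho_T(u,0)^2,
\end{equation*}
and $\ast$ denotes convolution on $\mathbb{T}$. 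Hence $\beta = 0$ if and only if $g$ is $\pi$-almost everywhere constant, and the theorem reduces to identifying the measures $\pi$ for which this happens.

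The final step uses Fourier analysis on $\mathbb{T}$. Splitting $\phi(u) = u^2$ on $[0, \tfrac 1 2]$ and $(1-u)^2$ on $[\tfrac 1 2, 1]$ and exploiting the evenness of $\phi$ about $0$, a standard integration by parts gives the Fourier coefficients
\begin{equation*}
    \widehat \phi(k) = \frac{(-1)^k}{2\pi^2 k^2} \qquad (k \neq 0),
\end{equation*}
each of which is non-zero. Since $\widehat g(k) = \widehat\phi(k)\widehat\pi(k)$, $g$ is Lebesgue-a.e.\ constant iff $\widehat\pi(k) = 0$ for every $k \neq 0$, iff $\pi$ is the uniform measure on $[0,1]$. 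The reverse direction ($\pi$ uniform implies $\beta = 0$) was already established in the proof of Theorem \ref{thm:torus_entropy} via translation invariance. The main obstacle is upgrading ``$g$ constant $\pi$-a.e.'' to ``$g$ constant Lebesgue-a.e.'': because $\phi$ is continuous, $g = \phi \ast \pi$ is continuous, so the upgrade is automatic whenever $\pi$ has full support, and the residual technicality for singular $\pi$ is handled by excluding the trivially degenerate case in which the coordinates collapse onto atoms, for which the CLT itself degenerates and the theorem statement is vacuous.
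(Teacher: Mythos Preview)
Your proof is correct and follows essentially the same route as the paper: reduce convergence to $G(n,p)$ to the vanishing of the off-diagonal entry $\beta$ of $\Sigma_T$, express $\beta$ as the variance under $\pi$ of the convolution $g=\phi\ast\pi$, and then use that the Fourier coefficients $\widehat\phi(k)=(-1)^k/(2\pi^2k^2)$ are all non-zero to force $\widehat\pi(k)=0$ for $k\neq 0$. You are in fact slightly more careful than the paper in two places (explicitly arguing via the bivariate orthant identity that $\beta\neq 0$ precludes an ER limit, and flagging the $\pi$-a.e.\ versus Lebesgue-a.e.\ issue), though your escape hatch for $\pi$ without full support---that ``the CLT itself degenerates''---is not accurate, since the multivariate CLT of Lemma~\ref{thm:torus_clt} holds whenever $\alpha>0$; the paper simply does not address this edge case either.
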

\begin{proof}
    We have convergence to the ER ensemble if and only if the diagonal entries of the covariance matrix $\beta = 0$. Recall that, if $X,Y,Z$ are i.i.d. according to $\pi$,
    \begin{equation}
        \beta = \mathbb{E}[(\rho_T(X,Y)^2-\mu_T)(\rho_T(X,Y)^2-\mu_T)]
    \end{equation}
    So, by writing the expectation as an integral, and expanding the brackets, we have convergence to ER if and only if
    \begin{equation}
            \int_0^1\int_0^1\int_0^1\rho_T(x,y)^2\rho_T(x,z)^2 d\pi(x)\pi(y)\pi(z)dxdydz = \mu_T^2
    \end{equation}
    \begin{equation}
        \iff \int_0^1\left(\int_0^1 \rho_T(x,y)^2\pi(y)dy\right)^2\pi(x)dx = \mu_T^2
    \end{equation}
    \begin{equation}
        \iff \mathbb{E}_X[\mathbb{E}_Y[\rho_T(X,Y)^2]^2] = \mathbb{E}_X[\mathbb{E}_Y[\rho_T(X,Y)^2]]^2
    \end{equation}
    which is equivalent to
    \begin{equation}
        Var_X(\mathbb{E}_Y[\rho_T(X,Y)^2]) = 0
    \end{equation}
    and so $\mathbb{E}_Y[\rho_T(X,Y)^2]$ is almost surely constant. Rewriting this as an integral,
    \begin{equation}
        \int_0^1 \rho_T(x,y)^2\pi(y)dy = C
    \end{equation}
    $\pi$ almost-everywhere. In fact, this constant must be $\mu_T$, since integrating this function (which is constant) over the distribution of $x$ must give $\mu_T$. To show that the unique distribution satisfying this property is uniform, we will calculate the Fourier series of both sides. For any constant $C$, we have the $\xi^{\text{th}}$ Fourier mode of $C$, $\hat{C}(\xi)$ is:
    \begin{equation}
        \hat{C}(\xi) = \int_0^1 Ce^{-2\pi i \xi x}dx = \frac{C}{2\pi i \xi} \left(1-e^{-2\pi i\xi}\right) = 0
    \end{equation}
    whenever $\xi$ is a non-zero integer, and $\hat{C}_0 = C$. For the left hand side, write
    \begin{equation}
        F(x) = \int_0^1\rho_T(x,y)^2\pi(y)dy = \int_0^1 \rho_T(x-y)^2\pi(y)dy = (\rho_T^2 \ast \pi)(x) 
    \end{equation}
    where in the second inequality we have used a slight abuse of notation to show that $\rho_T(x,y)$ is dependent only on the difference between $x$ and $y$, and $\ast$ denotes the convolution of two functions. Here, we treat the functions $\pi$ and $\rho_T$ as periodic on $\mathbb{R}$ so we can apply the convolution theorem, but restrict them to $[0,1]$ so that we are working on $\mathbb{T}$. We have:
    \begin{equation}
        \label{eq:fourier_product}
        \hat{F}(\xi) = \widehat{\rho_T^2}(\xi)\hat{\pi}(\xi)
    \end{equation}
    where for a function $f$ we denote by $\hat{f}(\xi)$ its $\xi^{\text{th}}$ Fourier mode. We can write the Fourier modes of $\rho_T^2$ by splitting it over the cases when the argument is greater than, or less than $\frac{1}{2}$:
    \begin{equation}
        \widehat{\rho_T^2}(\xi) = \int_0^1 \rho_T(x)^2e^{-2\pi i \xi x}dx = \int_0^{\frac{1}{2}} |x|^2e^{-2\pi i \xi x}dx + \int_{\frac{1}{2}}^1(1-|x|)^2e^{-2\pi i \xi  x}dx
    \end{equation}
    \begin{equation}
        = \int_{-\frac{1}{2}}^{\frac{1}{2}} x^2e^{-2 \pi i \xi x}dz
    \end{equation}
    \begin{equation}
        = \begin{cases}
            \frac{1}{12} & \xi = 0 \\
            \frac{(-1)^{\xi}}{2\pi^2\xi} & \xi \in \mathbb{Z}\setminus\{0\}
        \end{cases}
    \end{equation}
    % which is $\frac{1}{12}$ when $\xi = 0$, and on computing the integral using standard techniques,
    % \begin{equation}
    %     \widehat{\rho_T^2}(\xi ) = -2\frac{e^{-\pi i \xi}}{(-2\pi i \xi)^2}
    % \end{equation}
    % if $\xi \neq0$. Since $\xi $ is an integer, we can reduce this to
    % \begin{equation}
    %     \widehat{\rho_T^2}(\xi ) = \frac{(-1)^{\xi }}{2\pi^2\xi ^2} 
    % \end{equation}
    which is non-zero for every $\xi  \in \mathbb{Z}\setminus{0}$. Therefore (\ref{eq:fourier_product}) implies
    \begin{equation}
        \frac{1}{12}\hat{\pi}(0) = \mu_T
    \end{equation}
    and
    \begin{equation}
        \frac{(-1)^{\xi }}{2\pi^2\xi ^2}  \hat{\pi}(\xi ) = 0
    \end{equation}
    for $\xi \neq 0$. Therefore, we must have
    \begin{equation}
        \hat{\pi}(\xi)  = \begin{cases}
            12\mu_T & \text{if }\xi =0, \\
            0 & \text{if }\xi \neq0
        \end{cases}
    \end{equation}
    and therefore $\pi$ must be constant on $[0,1]$. Calculating $\mu_T$ for the uniform distribution, we find $\mu_T = \frac{1}{12}$, and therefore $\hat{\pi}(\xi) = 1$ if $\xi = 0$, and is $0$ elsewhere, so $\pi(x) = 1$ on $[0,1]$.
\end{proof}

As an example, suppose that in each coordinate, the nodes are distributed according to a Gaussian distribution restricted to $[0,1]$, that is,
\begin{equation}
    \label{eq:truncated_gaussian}
    \pi(x_i) = \begin{cases}
        \frac{1}{Z}\exp\left((x_i-\frac{1}{2})^2\right) & x_i \in [0,1] \\
        0 & \text{otherwise}
    \end{cases}
\end{equation}
where $Z = \int_0^1 \exp((x_i-\frac{1}{2})^2) dx_i$ is a normalising constant. From here onwards, when we refer to distributing nodes on $[0,1]^d$ or $\mathbb{T}^d$ according to the Gaussian distribution, we are referring to this truncated Gaussian distributed distribution, then using the respective distance metric to form the RGG.  Then, in the notation of Theorem \ref{thm:torus_entropy}, we have by numerical integration:
\begin{equation}
    \alpha \approx 0.005549
\end{equation}
\begin{equation}
    \beta \approx 0.000013
\end{equation}
the error on these estimates is less than $1.5\times 10^{-8}$. Note that $\beta > 0$. Therefore, the ensemble of RGGs with Gaussian distributed nodes \textit{does not} converge to the ER ensemble as $d\tti$, as there is a correlation between adjacent edges.
\subsubsection{Entropy of Soft RGG Ensembles in High Dimensional Tori}
We now consider ensembles of soft RGGs, and show that these converge to the ER ensemble for any distribution of nodes of the form (\ref{eq:general_distribution}).
\begin{theorem}
    \label{thm:soft_torus_entropy}
    Let $\mathcal{G}$ be an ensemble of $n$-node soft RGGs in $\mathbb{T}^d$ with  continuous  connection function $p(r/r_0)$ and i.i.d. nodes distributed according to $\nu$ of the form (\ref{eq:general_distribution}). Then if $r_0 = r_0(d) = \sqrt{kd} + o(\sqrt{d})$ as $d\tti$ for some constant $k$, we have that $\graph \rightarrow G(n,p_k)$ in distribution as $d\tti$, where $p_k := p(\sqrt{\mu_T/k})$.
\end{theorem}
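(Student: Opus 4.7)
The plan is to exploit conditional independence of edges given node positions, combined with a law-of-large-numbers type argument to show that every pairwise connection probability converges in probability to the same constant $p_k$.

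First I would note that, by the definition of an SRGG, the edges are conditionally independent given the node locations $X_1,\ldots,X_n$. Hence for any labelled graph $g$ with edge set $E(g)$,
\begin{equation}
    \mathbb{P}_d(g) = \mathbb{E}\!\left[\prod_{1\le i<j\le n} p(R_{ij}/r_0)^{x_{ij}}\bigl(1-p(R_{ij}/r_0)\bigr)^{1-x_{ij}}\right],
\end{equation}
where $R_{ij} = \rho_T(X_i,X_j)$. The squared distance $R_{ij}^2 = \sum_{k=1}^d \rho_T(X_i^k,X_j^k)^2$ is a sum of $d$ i.i.d. bounded random variables with mean $\mu_T$, so by the strong law of large numbers $R_{ij}^2/d \to \mu_T$ almost surely, and consequently $R_{ij}/\sqrt{d}\to \sqrt{\mu_T}$ a.s.

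Next, the assumption $r_0(d) = \sqrt{kd} + o(\sqrt{d})$ yields $r_0/\sqrt{d}\to\sqrt{k}$, so by Slutsky's theorem
\begin{equation}
    \frac{R_{ij}}{r_0} = \frac{R_{ij}/\sqrt{d}}{r_0/\sqrt{d}} \xrightarrow{\text{a.s.}} \sqrt{\mu_T/k}.
\end{equation}
By continuity of $p$ and the continuous mapping theorem, $p(R_{ij}/r_0)\to p(\sqrt{\mu_T/k}) = p_k$ almost surely, and the same holds jointly over the finitely many pairs $(i,j)$.

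Finally, the integrand in the displayed expression for $\mathbb{P}_d(g)$ is a continuous function of the vector $\{p(R_{ij}/r_0)\}_{i<j}$ and is bounded by $1$. Applying the bounded convergence theorem,
\begin{equation}
    \mathbb{P}_d(g) \longrightarrow \prod_{1\le i<j\le n} p_k^{x_{ij}}(1-p_k)^{1-x_{ij}} = p_k^{|E(g)|}(1-p_k)^{\binom{n}{2}-|E(g)|},
\end{equation}
which is exactly the probability of $g$ under $G(n,p_k)$. Since this convergence holds for every labelled $g$ on $n$ nodes, $\graph \to G(n,p_k)$ in distribution.

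The only delicate point is the interchange of limit and expectation, but boundedness of the connection function makes the bounded convergence theorem directly applicable, so there is no real obstacle — the continuity hypothesis on $p$ does all the work. It is worth highlighting why the argument fails for the hard connection function: there $p$ is discontinuous at $1$, and since $R_{ij}/r_0$ concentrates precisely at the value where the discontinuity can matter (controlled by $t$), one cannot avoid the covariance structure of $\Sigma_T$, which is exactly the source of the non-ER behaviour in Theorem \ref{no convergence for gaussian in torus thm}.
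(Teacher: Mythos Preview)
Your proof is correct and is in fact more streamlined than the paper's. Both arguments start from the same expression
\[
\mathbb{P}_d(g)=\mathbb{E}\Bigl[\prod_{i<j}p(R_{ij}/r_0)^{x_{ij}}(1-p(R_{ij}/r_0))^{1-x_{ij}}\Bigr],
\]
and both need to justify passing the limit inside the expectation. The paper does this ``by hand'': it changes variables to the normalised squared distances $\tau_{ij}$, invokes the multivariate CLT (Lemma~\ref{thm:torus_clt}) for the distribution of $\vec\tau$, then splits the expectation over a large $\ell_\infty$-ball $U$ and its complement, using uniform convergence of $\ell_d$ on $U$ together with Mill's inequality and a union bound to kill the contribution from $U^c$. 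You sidestep all of this by observing that $R_{ij}^2/d$ is a sample mean of bounded i.i.d.\ summands, so the law of large numbers gives $R_{ij}/r_0\to\sqrt{\mu_T/k}$, continuity of $p$ gives pointwise convergence of the integrand, and boundedness by $1$ lets the bounded convergence theorem do the rest. Your route uses only the LLN rather than the CLT and avoids the explicit tail estimates entirely; the paper's route is perhaps more in keeping with the CLT machinery developed elsewhere in the section, but yours is the cleaner argument for this particular theorem. One cosmetic remark: invoking ``Slutsky'' for the ratio $R_{ij}/r_0$ is harmless but slightly heavy-handed, since $r_0/\sqrt{d}$ is a deterministic sequence and ordinary algebra of almost-sure limits suffices.
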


\begin{proof}
    We can write the probability of an individual graph $g \in \graph$ as
    \begin{equation}
        \prob{g} = \int_{[0, \frac{\sqrt{d}}{2}]^{\binom{n}{2}}} f_{\vec{R}}(\vec{r})\prod_{i<j}p(r_{ij}/r_0)^{x_{ij}}(1-p(r_{ij}/r_0))^{1-x_{ij}} dr_{ij}
    \end{equation}
    Now let
    \begin{equation}
        r_{ij} = \sqrt{\tau_{ij}\sqrt{d}+\mu_Td}
    \end{equation}
    and we will transform the probability into the $\tau_{ij}$ domain. For ease of notation, define
    \begin{equation}
        \tilde{f}_d(\vec{\tau}) = f_{\vec{R}}\left(\sqrt{\tau_{12}\sqrt{d}+\mu_Td}, ...,\sqrt{\tau_{ij}\sqrt{d}+\mu_Td},..., \sqrt{\tau_{(n-1)n}\sqrt{d}+\mu_Td}\right)
    \end{equation}
    Writing $|J_d(\vec{\tau})|$ as the determinant of the Jacobian matrix of the transformation, we get
    \begin{equation}
        \prob{g} = \int_{[-\mu_T\sqrt{d}, \left(\frac{1}{4}-\mu_T\right)\sqrt{d}]^{\binom{n}{2}}} \tilde{f}_{d}(\vec{\tau})|J_d(\vec{\tau})|\ell_d(\vec{\tau})d\vec{\tau}
    \end{equation}
    where
    \begin{equation}
        \ell_d(\vec{\tau}) := \prod_{i<j} p\left(\frac{\sqrt{\tau_{ij}\sqrt{d}+\mu_Td}}{r_0}\right)^{x_{ij}}\left(1-p\left(\frac{\sqrt{\tau_{ij}\sqrt{d}+\mu_Td}}{r_0}\right)\right)^{1-{x_{ij}}}
    \end{equation}
    Now since $p$ is continuous, 
    \begin{equation}
        \lim_{d\tti} p\left(\frac{\sqrt{\tau_{ij}\sqrt{d}+\mu_Td}}{(1+o(1))\sqrt{kd}}\right) = p\left(\lim_{d\tti}\frac{\sqrt{\tau_{ij}\sqrt{d}+\mu_Td}}{(1+o(1))\sqrt{kd}}\right) = p\left(\sqrt{\frac{\mu_T}{k}}\right) =: p_k
    \end{equation}
    for any \textit{fixed} $\tau_{ij}$. Note that as $d\tti$, not every point in the domain of integration satisfies the above limit. For example, the end points of the domain of integration that scale as $\sqrt{d}$ do not converge to this limit. So, for any $l_{\infty}$ ball $U := \{x \in \mathbb{R}^{\binom{n}{2}} : \|x\|_{\infty} < u\}$ where $u \in \mathbb{R}$ is an arbitrary finite constant, we have uniform convergence of $\ell_d(\vec{\tau})$ to the limit 
    \begin{equation}
        \lim_{d\tti} \ell_{d}(\vec{\tau}) = \ell := \prod_{i<j}p_k^{x_{ij}}(1-p_k)^{1-x_{ij}}
    \end{equation}
    That is, for every $\delta >0$ we can choose a $d$ large enough so that $|\ell_d(\vec{\tau}) - \ell| < \delta$ for every $\vec{\tau}\in U$. Now we have
    \begin{equation}
        \mathbb{P}(g) = \int_{[-\mu_T\sqrt{d}, \left(\frac{1}{4}-\mu_T\right)\sqrt{d}]^{\binom{n}{2}}} \tilde{f}_{d}(\vec{\tau})|J_d(\vec{\tau})| \ell_d(\vec{\tau})d\vec{\tau} = \mathbb{E}_{\vec{\tau}}[\ell_d(\vec{\tau})]
    \end{equation}
    We want to show that $|\mathbb{E}_{\vec{\tau}}[\ell_d(\vec{\tau})] - \ell| \ttz$. Indeed, by Jensen's inequality,
    \begin{align}
        |\mathbb{E}_{\vec{\tau}}[\ell_d(\vec{\tau})] - \ell| \leq \mathbb{E}[|\ell_d(\vec{\tau}) - \ell|]
    \end{align}
    So, we need to bound the expectation $\mathbb{E}[|\ell_d(\vec{\tau}) - \ell|]$. Take any $l_{\infty}$ ball $U$ as above. Then $\ell_d(\vec{\tau}) \rightarrow \ell$ uniformly on $U$. We can write the expectation as
    \begin{equation}
        \mathbb{E}_{\vec{\tau}}[|\ell_d(\vec{\tau})-\ell|]  = \mathbb{E}_{\vec{\tau}}[|\ell_d(\vec{\tau})-\ell| \mathbb{I}(\vec{\tau} \in U)] + \mathbb{E}_{\vec{\tau}}[|\ell_d(\vec{\tau})-\ell|\mathbb{I}(\vec{\tau} \notin U)]  
    \end{equation}
    By uniform convergence on $U$, we can take $d$ large enough that the first term is less than $\epsilon_1$ for any $\epsilon_1 > 0$. For the second term, note that first, $\ell_d(\vec{\tau})$ is uniformly bounded above by 1, so
    \begin{equation}
        \mathbb{E}_{\vec{\tau}}[|\ell_d(\vec{\tau})-\ell|\mathbb{I}(\vec{\tau} \notin U)]  \leq  2\prob{\vec{\tau} \notin U}
    \end{equation}
    Then, if $Z \sim N(0,\Sigma_T)$,
    \begin{equation}
        \prob{\vec{\tau} \notin U} = \prob{\vec{\tau} \in U^c} - \prob{Z \in U^c} + \prob{Z \in U^c} 
    \end{equation}
    \begin{equation}
        \leq |\prob{\vec{\tau} \in U^c} - \prob{Z \in U^c}| + \prob{Z \in U^c} 
    \end{equation}
    and again, by the CLT in Lemma \ref{thm:torus_clt} we may choose $d$ large enough so that $|\prob{\vec{\tau} \in U^c} - \prob{Z \in U^c}|< \epsilon_2$ for any $\epsilon_2 > 0$. Then
    \begin{equation}
     \prob{Z \in U^c} = \prob{\bigcup_{i<j} \{|Z_{ij}| > u\}} \leq n\prob{|Z_{12}| > U}
    \end{equation}
    \begin{equation}
        \leq n\sqrt{\frac{2}{\pi}}\frac{\sigma}{u}e^{-\frac{u^2}{2\sigma^2}}
    \end{equation}
    where in the first line we used the union bound, and in the second we used Mill's inequality. The quantity $\sigma^2 = Var(Z) = Var(\tau_{ij})$ was given in the statement of Theorem \ref{thm:torus_entropy}, and is defined as $\alpha$. Since $\alpha$ is constant in $d$, $n$ is finite, and $u$ was chosen arbitrarily, we can make this term arbitrarily small (say, less than $\epsilon_3$) by selecting a large enough $u$. Then, for large enough $d$,
    \begin{equation}
        |\mathbb{E}_{\vec{\tau}}[\ell_d(\vec{\tau})] - \ell| \leq \epsilon := \epsilon_1 + 2(\epsilon_2+\epsilon_3)
    \end{equation}
    and so $\left|\mathbb{P}(g) - \prod_{i<j} p_k^{x_{ij}}(1-p_k)^{1-x_{ij}}\right| \rightarrow 0$ as required. 
\end{proof}

\begin{remark}
    This does not work for the hard RGGs because the deterministic connection function changes the limits of integration, and thus we cannot take the connection function out of the integral.
\end{remark}
To achieve the limiting graph ensemble with maximum possible entropy, we solve $p\left(\sqrt{\frac{\mu_T}{k}}\right) = \frac{1}{2}$ to give $G(n,\frac{1}{2})$.
\subsection{Entropy of Hard RGG Ensembles in the High Dimensional Cube}
This section is structured as follows. First, we will show that, under the Euclidean distance metric, and nodes distributed according to a density of the form (\ref{eq:general_distribution}), the RGG ensemble in the high dimensional cube with coordinate distribution $\pi$ that has kurtosis greater than 1 has a lower entropy than the ER ensemble. We will then show that the entropy of any such ensemble has a stationary point at $\bar{p}(t)=\frac{1}{2}$, and conjecture that this stationary point is a global maximum. To provide evidence for this claim, we then consider two specific distributions of nodes, the uniform and Gaussian distribution.

\subsubsection{General Setting}
For the following, we assume that $\mathcal{G}$ is an ensemble of hard RGGs in $[0,1]^d$ with nodes distributed according to a probability measure $\nu$ of the form (\ref{eq:general_distribution}).
\begin{definition}
    For an ensemble $\graph$ as above with connection range $r_0$, let $H_t(\graph)$ be the limit of the entropy $H(\graph)$ as $d\tti$, where $t = \lim_{d\tti} \frac{r_0^2}{\sqrt{d}} - \mu_c\sqrt{d}$ (if it exists) is the normalised connection range.
\end{definition}
\begin{theorem}
    \label{thm:hard_cube_entropy}
    For an ensemble $\graph$ as above,
    \begin{equation}
         H_t(\graph) < H\left(G\left(n,\frac{1}{2}\right)\right) = \binom{n}{2}
    \end{equation}
    provided that the coordinate-wise probability measure $\pi$ has kurtosis greater than 1.
\end{theorem}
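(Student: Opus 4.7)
The plan is to combine Lemma \ref{thm:cube_clt} with the subadditivity of Shannon entropy, using the kurtosis hypothesis to force non-zero off-diagonal entries in the limiting covariance $\Sigma_c$, which rules out the edges being asymptotically independent and so prevents the ensemble from reaching the maximum-entropy distribution.

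First I would compute the off-diagonal entry $\beta := \Sigma_c^{(i,j),(j,k)}$ corresponding to two edges that share a vertex. Writing $A, B, C$ for i.i.d.\ copies of $\pi$,
\begin{equation}
    \beta = \mathbb{E}\!\left[\bigl((A-B)^2 - \mu_c\bigr)\bigl((B-C)^2 - \mu_c\bigr)\right] = \mathrm{Var}\!\left(\mathbb{E}[(A-B)^2 \mid B]\right),
\end{equation}
where the second equality uses the conditional independence of $(A-B)^2$ and $(B-C)^2$ given $B$. A short expansion gives $\mathbb{E}[(A-B)^2 \mid B] = (B-\mu)^2 + \sigma^2$, so $\beta = \mathrm{Var}((B-\mu)^2) = \sigma^4(\kappa - 1)$, where $\sigma^2$ and $\kappa$ are the variance and kurtosis of $\pi$. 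The hypothesis $\kappa > 1$ thus forces $\beta > 0$. I would also record that pairs of edges sharing no vertex satisfy $\Sigma_c^{(i,j),(k,l)} = 0$ by a one-line application of independence and the definition of $\mu_c$.

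Second, I would translate this into a strict inequality on the entropy. By Lemma \ref{thm:cube_clt}, the normalised squared distances converge jointly to $N(0_{\binom{n}{2}}, \Sigma_c)$, and arguing as in the proof of Theorem \ref{thm:torus_entropy}, the limiting probability of a graph $g$ is the Gaussian mass of the orthant-like region cut out by the threshold $t$ and the edge set $E(g)$. Each edge indicator is then marginally Bernoulli with parameter $\bar{p}(t)$, so subadditivity of Shannon entropy yields
\begin{equation}
    H_t(\mathcal{G}) \leq \binom{n}{2} h_2(\bar{p}(t)) \leq \binom{n}{2}.
\end{equation}
Equality in the first inequality would require the $\binom{n}{2}$ edge indicators to be mutually independent; but for any pair of edges sharing a vertex, the corresponding two components of the limiting Gaussian have strictly positive covariance $\beta$, which (for finite $t$) produces thresholded Bernoulli variables with strictly positive covariance, hence not independent. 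The degenerate cases $t = \pm\infty$ collapse the ensemble to a single graph with $H_t(\mathcal{G}) = 0$, which satisfies the strict bound trivially.

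The main obstacle is the rigorous passage from positive Gaussian covariance to positive covariance of the thresholded Bernoulli pair --- this is standard, essentially Sheppard's formula for bivariate Gaussian orthant probabilities, but needs to be invoked carefully and uniformly in $t$. A secondary technicality is justifying that convergence in distribution of the squared-distance vector implies convergence of individual graph probabilities (and hence of the entropy); this follows from the portmanteau theorem since the boundaries of the relevant orthant-like regions have zero Gaussian measure, but deserves an explicit mention.
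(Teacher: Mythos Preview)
Your proposal is correct and follows the same overall strategy as the paper --- compute the off-diagonal covariance $\beta$ for adjacent edges, show $\beta > 0$ under the kurtosis hypothesis, and deduce that the limiting edge indicators cannot be independent, so the entropy falls strictly short of $\binom{n}{2}$. The execution differs in two places worth noting. First, your computation of $\beta$ via the law of total covariance (conditioning on the shared vertex $B$ to get $\beta = \mathrm{Var}((B-\mu)^2) = \sigma^4(\kappa-1)$) is cleaner than the paper's brute-force moment expansion, though both land on the same formula. Second, the paper concludes by arguing that positive dependence inflates the probability of the complete graph above $\bar p^{\binom{n}{2}}$, hence the graph distribution is not uniform; you instead invoke subadditivity of Shannon entropy and rule out the equality case by non-independence. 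Your route is arguably more direct for an entropy statement, and you are more careful about the technicalities (Sheppard's formula for the Gaussian-to-Bernoulli covariance passage, the portmanteau step for convergence of graph probabilities, the degenerate thresholds $t = \pm\infty$), all of which the paper leaves implicit.
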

\begin{proof}
    We will show that any choice of $\nu$ induces correlations between the edges. Then, since correlations must reduce entropy the result will follow. \newline

    As in the proof of Theorem \ref{thm:torus_entropy}, we will compute the off-diagonal elements of the covariance matrix, $\Sigma_c^{(i,j),(k,l)}$. As with the case of the torus, when $(i,j)$ and $(k,l)$ do not share an element, $\Sigma_c^{(i,j),(k,l)}=0$. When $(i,j)$ and $(k,l)$ share exactly one element, we have that, letting $X,Y,Z$ be independently distributed according to $\pi$ on $[0,1]$,
    \begin{equation}
        \beta := \mathbb{E}[(|X-Y|^2-\mu_c)(|Y-Z|^2-\mu_c|)]
    \end{equation}
    \begin{equation}
        = \mathbb{E}[|X-Y|^2|Y-Z|^2] - \mu^2
    \end{equation}
    \begin{equation}
        = \mathbb{E}[(X^2-2XY+Y^2)(Y^2-2YZ+Z^2)]-\mu^2
    \end{equation}
    \begin{gather}
        = \mathbb{E}[X^2Y^2 - 2YZX^2 + X^2Z^2 - 2XY^3 + 4XY^2Z \nonumber \\
        - 2XYZ^2 + Y^4 - 2Y^3Z+Y^2Z^2] - \mu^2
    \end{gather}
    \begin{equation}
        = 3\mathbb{E}[X^2]^2 - 4\mathbb{E}[X^3]\mathbb{E}[X] - \mu^2
    \end{equation}
    Where we have used that $X,Y$ and $Z$ are i.i.d. After noting that $\mu^2 = \mathbb{E}[|X-Y|^2]^2 = 4\mathbb{E}[X^2]^2-8\mathbb{E}[X^2]\mathbb{E}[X]^2+4\mathbb{E}[X]^4$ and factorising, we get that
    \begin{equation}
        \beta = E[(X-E[X])^4] - \mathbb{E}[(X-\mathbb{E}[X])^2]^2
    \end{equation}
    That is, adjacent distances are uncorrelated ($\beta=0$) only when the fourth central moment of $X$ is equal to the squared second central moment (variance) of $X$. In other words, the kurtosis of $X$ is 1, which is only satisfied by the Bernoulli distribution with parameter $\frac{1}{2}$ (see Appendix \ref{sec:bernoulli}). We can also see using Jensen's inequality that $\beta$ is always non-negative. Therefore, adjacent distances are \textit{positively correlated}. As a consequence of this, if $X_{ij}$ and $X_{ik}$ are random variables representing the edge indicators of edges $(i,j)$ and $(i,k)$ in $g\in\graph$, we have that they are positively dependent but identically distributed Bernoulli random variables, so
    \begin{equation}
        \prob{X_{ij}=1, X_{ik}=1} > \prob{X_{ij}=1}\prob{X_{ik}=1}
    \end{equation}
    and therefore the probability of the complete graph on $n$ nodes is greater than the probability of each edge existing independently. Thus, the distribution of graphs in $\graph$ cannot be uniform, and therefore the maximum possible entropy of $\binom{n}{2}$ is never achieved.
\end{proof}
We now show that the entropy always has a stationary point at $\bar{p}=\frac{1}{2}$.
\begin{proposition}
    Let $\mathcal{G}$ be an ensemble of $n$-node hard RGGs in $[0,1]^d$. Then in the $d\rightarrow\infty$ limit, the entropy has a stationary point at $\bar{p}=\frac{1}{2}$
\end{proposition}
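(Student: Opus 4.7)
The plan is to exploit a symmetry between a graph and its complement under the reflection $t \mapsto -t$, using the fact that the limiting Gaussian in Lemma~\ref{thm:cube_clt} has zero mean. Concretely, I will show that the limiting entropy $H_t(\graph)$ is an even function of $t$, so that $t = 0$ is automatically a stationary point. Since $\bar p(t) = \prob{Z_{12} < t}$ for a centred Gaussian marginal $Z_{12}$, the point $t=0$ is precisely where $\bar p = \tfrac{1}{2}$, which is what we want.

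First I would follow the template in the proof of Theorem~\ref{thm:torus_entropy} and use Lemma~\ref{thm:cube_clt} to write the limiting graph probability as
\begin{equation}
\prob{g;\, t} = \int_{\mathcal{A}(g,t)} N\!\left(0_{\binom{n}{2}},\, \Sigma_c\right)(\vec z)\, d\vec z, \qquad \mathcal{A}(g,t) = \bigotimes_{1\leq i<j\leq n} A_{ij}(g,t),
\end{equation}
where $A_{ij}(g,t) = (-\infty, t]$ if $(i,j)\in E(g)$ and $A_{ij}(g,t) = (t, \infty)$ otherwise. Smoothness of $\prob{g; t}$ in $t$ is immediate from smoothness of the Gaussian integrand, which justifies differentiating $H_t(\graph)$ in $t$.

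The key step is a complement symmetry. In the integral defining $\prob{g;\,-t}$, make the substitution $\vec z \mapsto -\vec z$; the Gaussian density is invariant under this reflection because it is centred, while each factor $(-\infty,-t]$ becomes $[t,\infty)$ and each $(-t,\infty)$ becomes $(-\infty,t)$ up to measure-zero boundaries. The image region is exactly $\mathcal{A}(\bar g, t)$, where $\bar g$ denotes the labelled complement graph of $g$, so
\begin{equation}
\prob{g;\,-t} = \prob{\bar g;\, t}.
\end{equation}
Since $g \mapsto \bar g$ is a bijection on labelled $n$-node graphs, re-indexing the entropy sum gives
\begin{equation}
H_{-t}(\graph) = -\sum_g \prob{g;\,-t}\log\prob{g;\,-t} = -\sum_g \prob{\bar g;\,t}\log\prob{\bar g;\,t} = H_t(\graph),
\end{equation}
so $H_t(\graph)$ is even in $t$, and therefore has a stationary point at $t=0$, equivalently $\bar p = \tfrac{1}{2}$.

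The calculation is essentially free once the symmetry is in place. The only step that genuinely requires care is identifying the correct symmetry, namely pairing the reflection $\vec z \mapsto -\vec z$ with the complement map on labelled graphs, and then verifying that these two operations match the orthant-like regions $\mathcal{A}(g,-t)$ and $\mathcal{A}(\bar g, t)$ exactly. Notably, no explicit computation of the cube covariance $\Sigma_c$ or of $H_t(\graph)$ itself is needed, so the conclusion will apply for any node distribution of the form~(\ref{eq:general_distribution}), not only those satisfying the kurtosis condition of Theorem~\ref{thm:hard_cube_entropy}.
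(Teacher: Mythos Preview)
Your proof is correct and shares the paper's core insight: the centred Gaussian limit from Lemma~\ref{thm:cube_clt} is invariant under $\vec z \mapsto -\vec z$, and this reflection exchanges each graph with its labelled complement while sending $t$ to $-t$. Where you diverge from the paper is in how you extract the conclusion. The paper introduces an abstract notion of a ``symmetric ensemble'' and proves two standalone lemmas: one establishing stationarity by explicitly differentiating the entropy and splitting the sum over graphs into those with more, fewer, and exactly $\tfrac{1}{2}\binom{n}{2}$ edges; and a second establishing $\bar p = \tfrac{1}{2}$ at the symmetric point by a combinatorial edge-counting argument. Your route is shorter: once $\prob{g;-t}=\prob{\bar g;t}$ is in hand, evenness of $H_t$ is a one-line re-indexing, stationarity at $t=0$ follows from smoothness, and $\bar p(0)=\tfrac12$ is just the CDF of a centred Gaussian at the origin. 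The paper's lemmas are slightly more general in that they apply to any ``symmetric'' ensemble rather than the Gaussian limit specifically, but for the proposition at hand your argument is more economical and equally rigorous.
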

We will prove this using two lemmas, which rely on the symmetry of an ensemble. Let $\mathbb{P}_t(g) := P(G=g)$ where $G$ is sampled from a hard RGG ensemble with normalised connection range $t$. We say that the ensemble of graphs is \textit{symmetric} about $a\in\mathbb{R}$ if for every $x \in \mathbb{R}$ and every graph $g\in \mathcal{G}$, we have that $\mathbb{P}_{a+x}(g) = \mathbb{P}_{a-x}(g^c)$, where $g^c$ is the complement of $g$, constructed by forming an edge whenever there is an edge missing in $g$, and vice versa.   

\begin{lemma}
    \label{lem:first}
    Let $H_t(\mathcal{G})$ be the entropy of a graph ensemble $\mathcal{G}$ with normalised connection range $t$ which is symmetric about $a$. Then $t=a$ is a stationary point of the entropy.
\end{lemma}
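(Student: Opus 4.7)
The plan is to exploit the bijection $g \mapsto g^c$ on the finite set of labelled graphs to show that $H_t(\mathcal{G})$ is itself symmetric about $t=a$ as a function of $t$, from which stationarity at $t=a$ follows immediately (provided the entropy is differentiable there, which it is since each $\mathbb{P}_t(g)$ is a smooth function of $t$ through the Gaussian CDFs that arise in the $d\to\infty$ limit).

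First I would write the entropy as
\begin{equation*}
    H_{a+x}(\mathcal{G}) = -\sum_{g \in \mathcal{G}} \mathbb{P}_{a+x}(g)\log \mathbb{P}_{a+x}(g).
\end{equation*}
Then I would apply the hypothesised symmetry $\mathbb{P}_{a+x}(g) = \mathbb{P}_{a-x}(g^c)$ term-by-term, so each summand becomes $-\mathbb{P}_{a-x}(g^c)\log \mathbb{P}_{a-x}(g^c)$. Since complementation $g\mapsto g^c$ is a bijection of $\mathcal{G}$ onto itself, reindexing the sum via $g' := g^c$ gives
\begin{equation*}
    H_{a+x}(\mathcal{G}) = -\sum_{g' \in \mathcal{G}} \mathbb{P}_{a-x}(g')\log \mathbb{P}_{a-x}(g') = H_{a-x}(\mathcal{G}).
\end{equation*}

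Hence $H_t(\mathcal{G})$ is a symmetric function of $t$ about $t=a$. Differentiating the identity $H_{a+x}(\mathcal{G}) = H_{a-x}(\mathcal{G})$ with respect to $x$ and evaluating at $x=0$ yields $2 H'_a(\mathcal{G}) = 0$, so $t=a$ is a stationary point.

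The step that needs a small justification is the differentiability of $H_t$ at $t=a$. This is not really an obstacle: in the settings of interest each $\mathbb{P}_t(g)$ is a Gaussian orthant probability (Theorem~\ref{thm:torus_entropy} and the setup preceding Theorem~\ref{thm:hard_cube_entropy}), which is $C^\infty$ in $t$, and one can differentiate the finite sum termwise since $0\le\mathbb{P}_t(g)\le 1$ and the $-p\log p$ factor extends continuously (with derivative bounded near endpoints by $|\log p|+1$) to any neighbourhood of $t=a$ on which the probabilities are bounded away from $0$ and $1$. In the degenerate case where some $\mathbb{P}_a(g)\in\{0,1\}$ one argues directly from the symmetry $H_{a+x} = H_{a-x}$ that $a$ is still a one-sided stationary point of any one-sided derivative, which suffices for the subsequent use in Proposition above.
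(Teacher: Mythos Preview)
Your argument is correct and in fact cleaner than the paper's. The paper does not observe the global identity $H_{a+x}(\mathcal{G})=H_{a-x}(\mathcal{G})$ directly; instead it partitions $\mathcal{G}$ into $\mathcal{G}^>$, $\mathcal{G}^<$, $\mathcal{G}^=$ according to whether the edge count is above, below, or equal to $\tfrac12\binom{n}{2}$, differentiates the entropy termwise, and then cancels the contributions in pairs using the derivative relation $\partial_x\mathbb{P}_{a+x}(g)=-\partial_x\mathbb{P}_{a-x}(g^c)$ (with a separate treatment of $\mathcal{G}^=$). Your reindexing via the bijection $g\mapsto g^c$ achieves the same cancellation in one stroke at the level of the entropy itself, avoiding the edge-count partition and the explicit derivative computation altogether. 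The paper's route makes the mechanism of cancellation more visible at the level of individual graph probabilities, which connects naturally to its subsequent Lemma~\ref{lem:second} (where the partition is reused), but for the present statement your symmetry argument is both shorter and more robust, since it sidesteps any need to track which graphs lie in which piece of the partition.
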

\begin{proof}
    Define $\graph^> := \{g \in \graph : g \text{ has }>\binom{n}{2} \text{ edges}\}$, $\graph^< := \{g \in \graph : g \text{ has }<\binom{n}{2} \text{ edges}\}$ and $\graph^= := \{g \in \graph : g \text{ has }\binom{n}{2} \text{ edges}\}$. Then due to symmetry, $g \in\graph^> \iff g^c \in \graph^<$, and $g \in \graph^= \iff g^c \in \graph^=$. We note that the contribution to the derivative of the terms in $\graph^=$ is 0. To see this, partition $\graph^=$ into two disjoint subsets so that the complement of every $g_1 \in \graph_1^=$ is in $\graph_2^=$.
    \begin{gather}
        \frac{\partial}{\partial x}\left(\sum_{g\in\mathcal{G}^=} \mathbb{P}_{a+x}(g)\log \mathbb{P}_{a+x}(g)\right) = \sum_{g_1\in\mathcal{G}_1^=} \frac{\partial \mathbb{P}_{a+x}(g_1)}{\partial x}\left(1+\log \mathbb{P}_{a+x}(g_1)\right) \nonumber \\
        + \sum_{g_2\in\mathcal{G}_2^=} \frac{\partial \mathbb{P}_{a+x}(g_2)}{\partial x}\left(1+\log \mathbb{P}_{a+x}(g_2)\right)
    \end{gather}
    \begin{equation}  
        =  \sum_{g_1\in\mathcal{G}_1^=} \left(\frac{\partial \mathbb{P}_{a+x}(g_1)}{\partial x}\left(1+\log \mathbb{P}_{a+x}(g_1)\right) - \frac{\partial \mathbb{P}_{a+x}(g_1)}{\partial x}\left(1+\log \mathbb{P}_{a-x}(g_1^c)\right)\right) = 0
    \end{equation}
    because $\partial_x \mathbb{P}_{a+x}(g) = -\partial_x \mathbb{P}_{a-x}(g^c)$. Thus the derivative is given by:
    \begin{equation}
        \frac{\partial H_{a+x}(\mathcal{G})}{\partial x} = -\sum_{g\in\mathcal{G}} \frac{\partial }{\partial x}\left(\mathbb{P}_{a+x}(g)\log \mathbb{P}_{a+x}(g) \right)
    \end{equation}
    \begin{equation}
        -\sum_{g\in\mathcal{G}^>} \frac{\partial }{\partial x}\left(\mathbb{P}_{a+x}(g)\log \mathbb{P}_{a+x}(g) \right) -\sum_{g\in\mathcal{G}^<} \frac{\partial }{\partial x}\left(\mathbb{P}_{a+x}(g)\log \mathbb{P}_{a+x}(g) \right)
    \end{equation}
    \begin{equation}
        = -\sum_{g\in\mathcal{G}^>} \frac{\partial \mathbb{P}_{a+x}(g)}{\partial x}\left(1+\log \mathbb{P}_{a+x}(g) \right)-\sum_{g\in\mathcal{G}^<} \frac{\partial \mathbb{P}_{a+x}(g)}{\partial x}\left(1+\log \mathbb{P}_{a+x}(g) \right)
    \end{equation}
    \begin{equation}
        = -\sum_{g\in\mathcal{G}^>} \frac{\partial \mathbb{P}_{a+x}(g)}{\partial x}\left(\log(\mathbb{P}_{a+x}(g)) - \log(\mathbb{P}_{a-x}(g)\right)
    \end{equation}
    When $x=0$, $a+x = a-x$, so $\mathbb{P}_{a+x}(g) = \mathbb{P}_{a-x}(g)$, so the derivative is equal to 0. Therefore we have a stationary point at $x=0$, so the normalised connection range of the ensemble is $t = a+0 = a$.
\end{proof}
Next, we show that $\bar{p}=\frac{1}{2}$ at the symmetric point.
\begin{lemma}
    \label{lem:second}
    For any distribution of graphs that is symmetrical about $a$, $\bar{p}=\frac{1}{2}$ at $t=a$.
\end{lemma}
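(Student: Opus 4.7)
The plan is to set $x=0$ in the symmetry hypothesis to obtain the key relation $\mathbb{P}_a(g) = \mathbb{P}_a(g^c)$ for every labelled graph $g \in \mathcal{G}$, and then rewrite $\bar{p}$ as the normalised expected number of edges so that the symmetry can be exploited by a $g \leftrightarrow g^c$ pairing argument.

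First I would recall that for any ensemble, $\bar{p}$ equals the probability that a fixed edge $(i,j)$ is present in a random realisation, so by linearity of expectation
\begin{equation}
    \bar{p} = \frac{1}{\binom{n}{2}} \mathbb{E}[|E(G)|] = \frac{1}{\binom{n}{2}} \sum_{g \in \mathcal{G}} \mathbb{P}_a(g)\, |E(g)|.
\end{equation}
Next I would observe that for labelled graphs on $n \geq 2$ vertices, no graph satisfies $g = g^c$, because $E(g) = \binom{V}{2} \setminus E(g)$ is impossible for a nonempty potential edge set. Hence $\mathcal{G}$ partitions into disjoint complementary pairs $\{g, g^c\}$.

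The main step is then to apply the pairing, using both $|E(g)| + |E(g^c)| = \binom{n}{2}$ and the symmetry $\mathbb{P}_a(g) = \mathbb{P}_a(g^c)$:
\begin{equation}
    \sum_{g \in \mathcal{G}} \mathbb{P}_a(g)\, |E(g)| = \sum_{\{g,g^c\}} \bigl(\mathbb{P}_a(g)|E(g)| + \mathbb{P}_a(g^c)|E(g^c)|\bigr) = \binom{n}{2} \sum_{\{g,g^c\}} \mathbb{P}_a(g).
\end{equation}
Since $\sum_{g \in \mathcal{G}} \mathbb{P}_a(g) = 1$ and each pair contributes $2\mathbb{P}_a(g)$ to this total (again by the symmetry), we have $\sum_{\{g,g^c\}} \mathbb{P}_a(g) = \tfrac{1}{2}$. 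Substituting back gives $\sum_{g} \mathbb{P}_a(g)|E(g)| = \binom{n}{2}/2$, and dividing by $\binom{n}{2}$ yields $\bar{p} = \tfrac{1}{2}$.

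There is no real obstacle here; the argument is essentially a bookkeeping exercise once the symmetry is reduced to $\mathbb{P}_a(g) = \mathbb{P}_a(g^c)$. The only subtle point to be careful about is to note that self-complementary labelled graphs do not exist (avoiding the need to treat them as a special case), and to remember that $\bar{p}$ denotes the marginal edge probability rather than some other summary of the ensemble, so that linearity of expectation applies directly.
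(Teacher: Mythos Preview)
Your proof is correct and uses essentially the same idea as the paper: express $\bar{p}$ as the normalised expected edge count $\binom{n}{2}^{-1}\sum_g \mathbb{P}_a(g)|E(g)|$ and exploit the complement pairing $\mathbb{P}_a(g)=\mathbb{P}_a(g^c)$ together with $|E(g)|+|E(g^c)|=\binom{n}{2}$. Your version is in fact cleaner than the paper's, which unnecessarily splits $\mathcal{G}$ into $\mathcal{G}^{>}$, $\mathcal{G}^{<}$, $\mathcal{G}^{=}$ by edge count before carrying out what is ultimately the same pairing argument; your observation that no labelled graph on $n\geq 2$ vertices equals its own complement lets you pair everything at once without that case analysis.
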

\begin{proof}
    As before, partition $\mathcal{G}^=$ into two disjoint subsets, $\mathcal{G}_1^=$ and $\mathcal{G}_2^=$ such that the complement of every graph $g \in \mathcal{G}_1^=$ is in $\mathcal{G}_2^=$ and vice versa. Let $x \in \mathbb{R}$, and $E(g)$ be the edge set of $g$, then we can express $\bar{p}$ as
    \begin{equation}
        \bar{p} = \binom{n}{2}^{-1}\sum_{g \in \mathcal{G}} |E(g)|\mathbb{P}_{a+x}(g)
    \end{equation}
    \begin{gather}
        = \binom{n}{2}^{-1}\left(\sum_{g \in \mathcal{G}^<}|E(g)|\mathbb{P}_{a+x}(g) + \sum_{g \in \mathcal{G}^<}|E(g^c)|\mathbb{P}_{a+x}(g^c) \right. \nonumber \\
        + \left. \sum_{g \in \mathcal{G}_1^=} \left(\frac{1}{2}\binom{n}{2} \mathbb{P}_{a+x}(g) + \frac{1}{2}\binom{n}{2} \mathbb{P}_{a+x}(g^c)\right)\right)
    \end{gather}
    \begin{gather}
        = \binom{n}{2}^{-1}\left(\sum_{g \in \mathcal{G}^>}(|E(g)|\mathbb{P}_{a+x}(g) + \left(\binom{n}{2}-|E(g)|\right)\mathbb{P}_{a+x}(g)) \right. \nonumber \\
        + \left. \sum_{g \in \mathcal{G}_1^=} \left(\frac{1}{2}\binom{n}{2} \mathbb{P}_{a+x}(g) + \frac{1}{2}\binom{n}{2} \mathbb{P}_{a+x}(g^c)\right) \right)
    \end{gather}
    \begin{equation}
        =  \binom{n}{2}^{-1}\left(\sum_{g \in \mathcal{G}^>}\binom{n}{2}\mathbb{P}_{a+x}(g) + \sum_{g\in\mathcal{G}^=}\binom{n}{2}\mathbb{P}_{a+x}(g)\right)
    \end{equation}
    \begin{equation}
        = \sum_{g\in\mathcal{G}^> \cup \mathcal{G}_1^=} \mathbb{P}_{a+x}(g)
    \end{equation}
    Where the penultimate equality follows because the probability of any graph in $\mathcal{G}_1^=$ is equal to the probability of its complement, and the last equality is due to the symmetry of the ensemble. When $x=0$, we have that 
    \begin{equation}
        \sum_{g\in\mathcal{G}^>\cup \mathcal{G}_1^=} \mathbb{P}_{a+x}(g) = \sum_{g\in\mathcal{G}^>\cup \mathcal{G}_1^=} \mathbb{P}_{a-x}(g) = \sum_{g\in\mathcal{G}^<\cup \mathcal{G}_2^=} \mathbb{P}_{a+x}(g)
    \end{equation}
    So
    \begin{equation}
    1 = \sum_{g \in \mathcal{G}} \mathbb{P}_{a+x}(g) = \sum_{g\in\mathcal{G}^>\cup \mathcal{G}_1^=} \mathbb{P}_{a+x}(g) + \sum_{g\in\mathcal{G}^<\cup \mathcal{G}_2^=} \mathbb{P}_{a+x}(g) = 2\bar{p}
    \end{equation}
    So $\bar{p}=\frac{1}{2}$
\end{proof}
Then we can prove the proposition.
\begin{proof}[Proof of Proposition 1]
    By using Lemma 1, and then Lemma 2, it suffices to show that the Gaussian limit of $\mathcal{G}$ is symmetric about $t=0$. For a graph $g$, with edge indicators $\{x_{ij}\}_{1\leq i<j \leq n}$, construct the set $\mathcal{A}_g$ as
    \begin{equation}
        \mathcal{A}_g = \bigotimes_{1 \leq i < j \leq n} (-\infty, 0]^{x_{ij}}\times[0, \infty)^{1-x_{ij}}
    \end{equation}
    where $\bigotimes$ denotes the Cartesian product of sets and we set the value of a set to the power 0 as the empty set. Then $\mathcal{A}_g$ defines the region of $\mathbb{R}^{\binom{n}{2}}$ such that if a Gaussian sample is in $\mathcal{A}_g$, the resulting graph is the graph that $\mathcal{A}_g$ was constructed from. Note that $\mathbb{R}^{\binom{n}{2}} \setminus \mathcal{A}_g$ is constructed from the complement of the graph that constructed $\mathcal{A}_g$. So
    \begin{equation}
    \label{eq:gaussian_expectation}
        P(G = g) = \int_{\mathcal{A}_g} N(\boldsymbol{0},\Sigma_c)(\boldsymbol{q})d\boldsymbol{q} 
    \end{equation}
    Now since the individual coordinates of the Gaussian are symmetrical about 0,
    \begin{equation}
        P(G = g^c) = \int_{\mathbb{R}^{\binom{n}{2}}\setminus\mathcal{A}_g} N(\boldsymbol{0},\Sigma_c)(\boldsymbol{q})d\boldsymbol{q} = \int_{\mathcal{A}_g} N(\boldsymbol{0},\Sigma_c)(\boldsymbol{q})d\boldsymbol{q} = P(G = g)
    \end{equation}
    and therefore we are done by a direct application of Lemmas \ref{lem:first} and \ref{lem:second}.
\end{proof}
Through numerical simulations (in the sections below), it seems that the stationary point at $\bar{p}=\frac{1}{2}$ is a global maximum. Intuitively, the maximum entropy configuration seems that it should occur at the symmetric point, as there is a reasonably large probability of each graph occurring. It also seems that, from the low dimension simulations, $\bar{p}_{max} \rightarrow \frac{1}{2}$ as $d$ gets larger. However, we are at present unable to prove it.
\begin{conjecture}
    The maximum of $H_t(\graph)$ occurs when $t=0$, and therefore $\bar{p}_{max} \rightarrow \frac{1}{2}$ as $d\tti$.
\end{conjecture}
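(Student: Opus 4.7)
The plan is to strengthen the stationary--point result of the preceding proposition at $t=0$ into a global maximum, by reducing the conjecture to a minimization problem for the total correlation of the Gaussian--thresholded edge-indicator vector.

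By Lemma \ref{thm:cube_clt}, the limiting joint law of the edge indicators is
\begin{equation}
\vec{X}(t) = \bigl(\mathbb{I}(Z_{ij} \leq t)\bigr)_{1\leq i<j\leq n}, \qquad \vec{Z} \sim N\bigl(0_{\binom{n}{2}}, \Sigma_c\bigr),
\end{equation}
and the graph entropy admits the decomposition
\begin{equation}
H_t(\graph) \;=\; \binom{n}{2}\, h_2\bigl(\bar{p}(t)\bigr) \;-\; I(t),
\end{equation}
where $I(t) := \sum_{i<j} H(X_{ij}(t)) - H(\vec{X}(t))$ is the total correlation. The marginal term $\binom{n}{2} h_2(\bar{p}(t))$ is strictly maximized at $t=0$ with value $\binom{n}{2}$, so the conjecture reduces to showing that $I(t)$ is minimized at $t=0$. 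The involution $t\mapsto -t$ sends $\vec{X}(t)$ to $\mathbf{1}-\vec{X}(-t)$ by Gaussian symmetry, so $I(t)=I(-t)$; this recovers the critical point of the preceding proposition at the level of total correlation and identifies $t=0$ as the natural candidate minimizer.

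To upgrade this critical point to a global minimum, I would combine three ingredients. First, tail control: as $|t|\to\infty$, $\bar{p}(t)\to 0$ or $1$, so $h_2(\bar{p}(t))\to 0$ and hence $H_t(\graph) \leq \binom{n}{2} h_2(\bar{p}(t)) \to 0$, while $H_0(\graph) > 0$; consequently the supremum of $H_t$ is attained on a bounded interval. Second, a local--maximum verification at $t=0$: expand the bivariate and higher Gaussian orthant probabilities around $t=0$ via a Mehler / Hermite-polynomial expansion in the off-diagonal correlation $\rho := \beta/\alpha$ (strictly positive by Theorem \ref{thm:hard_cube_entropy}), which yields $I''(0) > 0$ and hence $H_t''(0) < 0$, so $t=0$ is a strict local maximum. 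Third, uniqueness of the interior stationary point on the bounded interval from the first step.

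The principal obstacle is the third ingredient: showing that $t=0$ is the only interior stationary point of $H_t$. A natural attack combines the positivity $\beta > 0$ with positive-association (FKG-- or Gaussian--correlation--type) inequalities, which intuitively should force the pairwise concordance probabilities $\prob{X_{ij}(t)=X_{kl}(t)}$ to increase monotonically in $|t|$, and hence $I(t)$ to be strictly increasing in $|t|$. Making this monotonicity rigorous for general $n$ appears delicate because of higher-order interactions between triples of edges, which is what prevents the statement from being promoted to a theorem here; it would be natural first to settle the $n=3$ case directly from the trivariate Gaussian CDF with the permutation symmetry of $\Sigma_c$, before attempting either an inductive argument on $n$ or a general FKG-based monotonicity proof.
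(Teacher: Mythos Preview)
The paper does not prove this statement: it is explicitly labelled a \emph{conjecture}, supported only by the numerical evidence in Figure~\ref{fig:unimodal} and the stationary-point Proposition preceding it. There is therefore no proof in the paper for your proposal to be compared against.

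Your outline is a reasonable plan of attack, and the parts you do carry out are sound: the decomposition $H_t(\graph)=\binom{n}{2}h_2(\bar p(t))-I(t)$ is standard, the symmetry $I(t)=I(-t)$ follows exactly as you say, and the tail argument correctly confines any maximizer to a compact interval. Two points deserve sharpening. First, the claim that ``the conjecture reduces to showing that $I(t)$ is minimized at $t=0$'' is a \emph{sufficient} criterion, not an equivalence: it is conceivable that $I$ is not minimized at $0$ yet $H_t$ is still maximized there because the strictly negative curvature of the marginal term $\binom{n}{2}h_2(\bar p(t))$ at $t=0$ dominates. Second, your step ``Mehler expansion $\Rightarrow I''(0)>0$'' is asserted rather than shown; the Hermite/Mehler route is plausible, but the sign of $I''(0)$ involves a cancellation between the curvature of the marginal entropies (each with negative second derivative at $t=0$) and that of the joint entropy, and you have not verified it.

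As you yourself flag, the real gap is the third ingredient---uniqueness of the interior critical point, or equivalently monotonicity of $I$ in $|t|$. Your heuristic that concordance probabilities $\prob{X_{ij}(t)=X_{kl}(t)}$ increase in $|t|$ is correct for \emph{pairs} (for independent pairs it is $1-2\bar p(1-\bar p)$, and for correlated pairs a direct computation with the bivariate normal CDF gives the same qualitative behaviour), but total correlation is not a pairwise quantity, and FKG/Gaussian-correlation inequalities control joint orthant probabilities rather than the entropy of the thresholded vector. So the proposal remains, like the paper's own treatment, a strategy rather than a proof; the honest status is that the conjecture is open and your outline does not close it.
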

\begin{remark}
    In \cite{erba2020random}, it is shown that soft RGGs in the cube tend to the same ER limit as we have shown for the torus. Therefore, it requires the same method as in the torus to calculate their entropy in the $d\tti$ limit.
\end{remark}
\subsubsection{Uniform Distribution}
\label{sec:uniform}
Here, we will set $\nu(\vec{x}) = 1$, the uniform distribution on $[0,1]^d$. We find that, in this setting, $\mu_c = \frac{1}{6}$, and $\Sigma_c$ has the form of a diagonal matrix with diagonal entries equal to $\frac{7}{180}$, with off-diagonal entries equal to 0, except for $\Sigma_c^{(i,j),(i,k)} = \frac{1}{180}$. Let $\graph_k$ be the set of graphs in $\graph$ with $k$ edges. Define the `unnormalised' and `normalised' probabilities of a graph having $k$ edges as $P(g \in \graph_k)$ and $\binom{\binom{n}{2}}{k}^{-1}P(g\in\graph_k)$ respectively.  The normalised probabilities correspond to probabilities of a specific graph on $k$ edges occurring (since there are $\binom{k}{2}$ such graphs), and the unnormalised probabilities correspond to \textit{any} graph on $k$ edges occurring.  Figure \ref{fig:cube_dists} shows the normalised and unnormalised probabilities of a graph having $k$ edges in the as $d\tti$ in $[0,1]^d$ with a uniform distribution when $\bar{p}=\frac{1}{2}$, along with the comparison of the binomial distribution of edge counts that we would see in the $d\tti$ limit for $\mathbb{T}^d$.

\begin{figure}
    \centering
    \includegraphics[width=.9\linewidth]{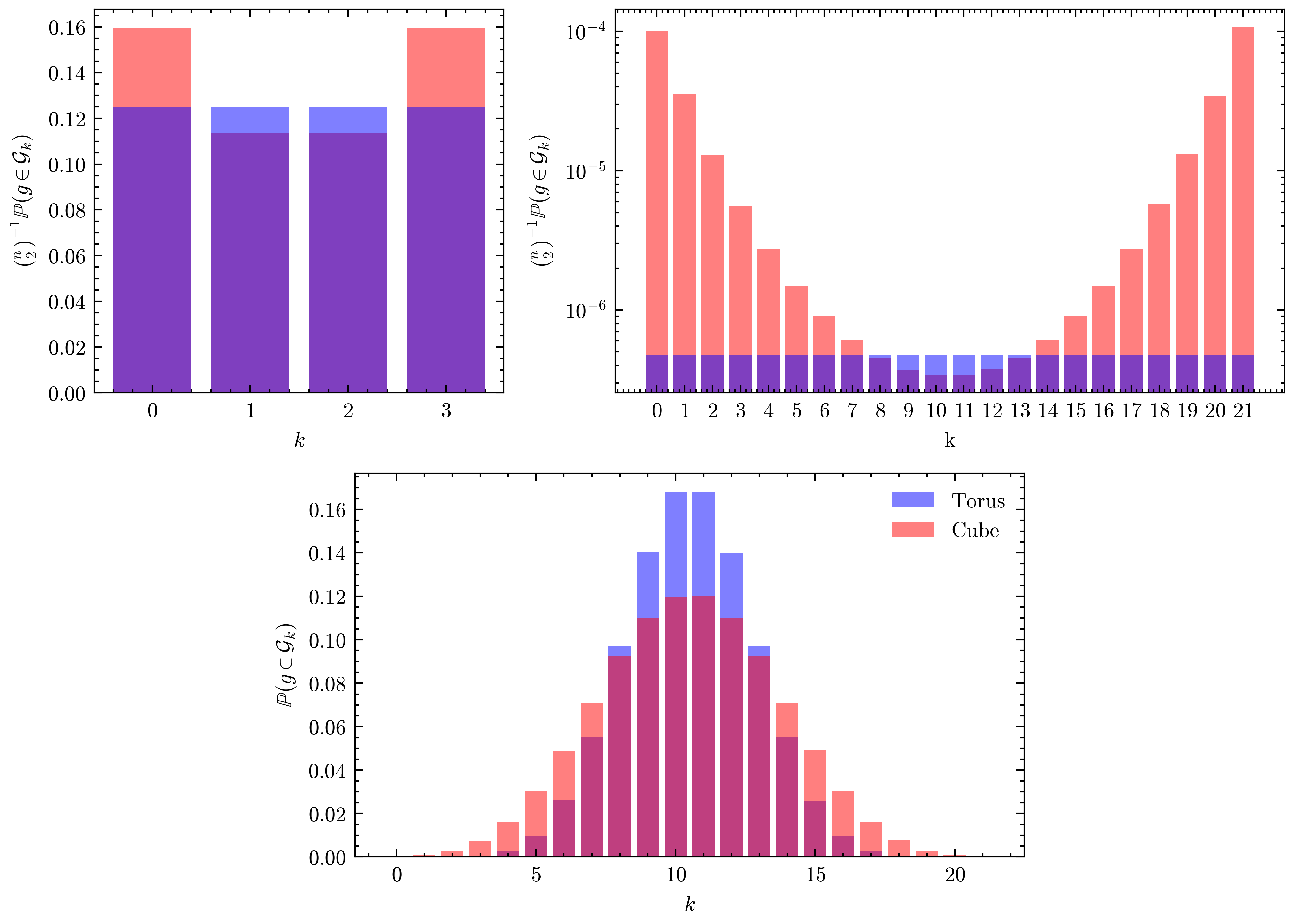}
    \caption{Top: Normalised distribution of edge count in the $d\tti$ limit for the hard RGG in the cube and torus, with uniformly distributed nodes for $n=3$ (top left), and $n=7$ (top right). Bottom: Comparison of the unnormalised distribution of edge counts for $n=7$ for hard RGGS with uniformly distributed nodes in the cube and torus.}
    \label{fig:cube_dists}
\end{figure}

\subsubsection{Truncated Gaussian Distribution}
Now, we compare the distribution of graphs in the cube and the torus for nodes distributed according to (\ref{eq:truncated_gaussian}). That is, we distribute points according to the truncated Gaussian, then use the respective distance metric to form the RGG. In the cube, we obtain $\mu_c\approx0.0243$ and $\Sigma_c$ has diagonal entries $\approx 0.0348$, and off-diagonal entries 0 except for $\Sigma_c^{(i,j),(i,k)}\approx 0.000434$. In the torus (as in Section \ref{no convergence for gaussian in torus}), we get $\mu_T \approx 0.006903$, and $\Sigma_T$ has diagonal elements $\approx 0.005549$, and off-diagonal entries 0 except for $\Sigma_T^{(i,j),(i,k)} \approx 0.000013$. We can see from Figure \ref{fig:gaussian_cube_dists} that the Gaussian distribution of nodes induces much heavier tails than the uniform distribution of Figure \ref{fig:cube_dists} in the cube, and we see a very slight, but noticeable deviation from the uniform distribution of graphs in the torus. Figure \ref{fig:unimodal} shows $H_t(\graph)$ against $t$ for $[0,1]^d$ in the $d\tti$ limit for $n=3$ and $11$ for both uniformly and Gaussian distributed nodes. We can clearly see strong evidence that the entropy has a unique maximum at $t=0$.

\begin{figure}
    \centering
    \includegraphics[width=.9\linewidth]{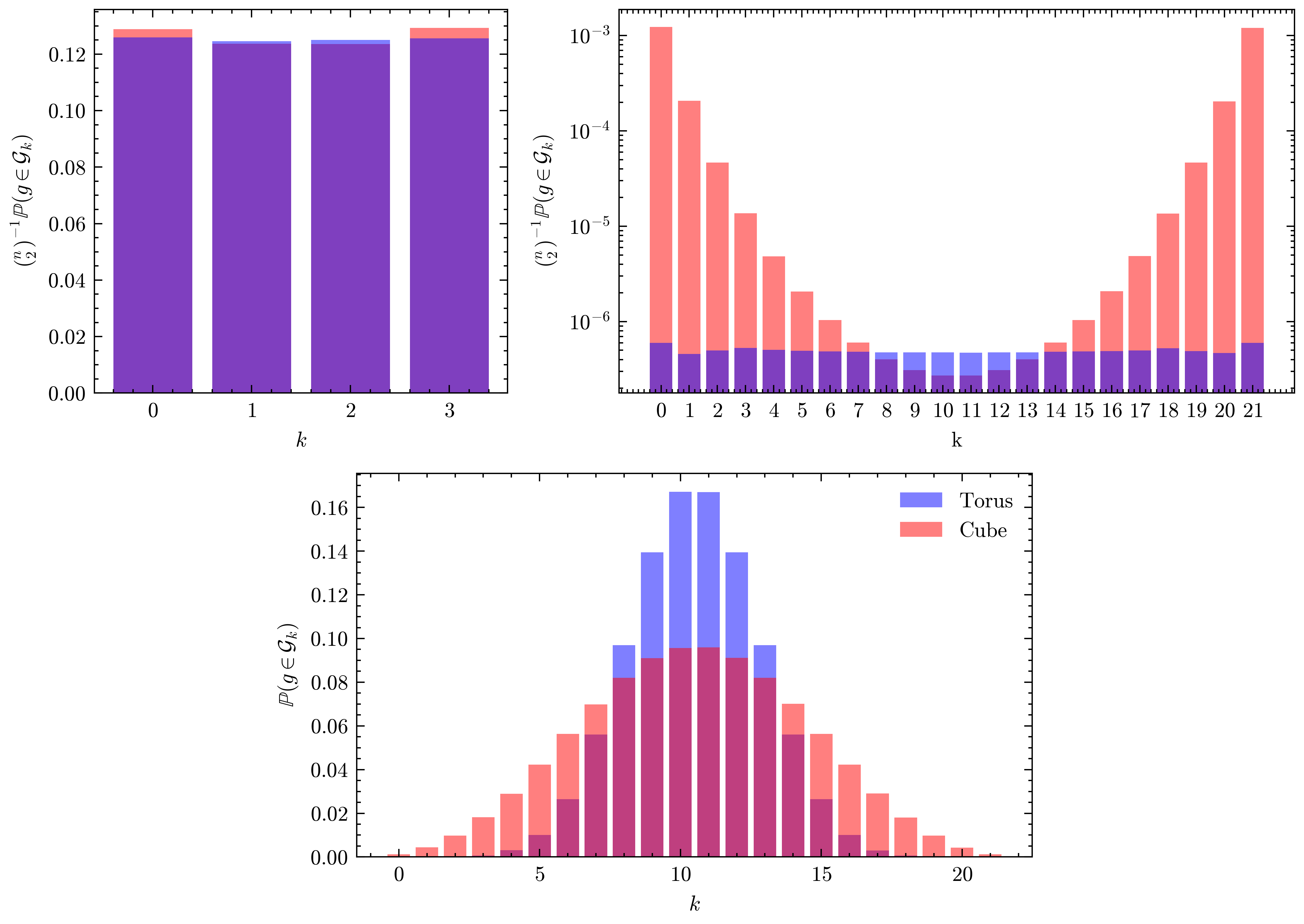}
    \caption{Top: Normalised distribution of edge count in the $d\tti$ limit for the hard RGG in the cube (red) and torus (blue), with Gaussian distributed nodes for $n=3$ (top left), and $n=7$ with a log scale for the probabilities (top right). Bottom: Comparison of the unnormalised distribution of edge counts for $n=7$ for hard RGGS with Gaussian distributed nodes in the cube and torus. }
    \label{fig:gaussian_cube_dists}
\end{figure}

\begin{figure}
    \centering
    \includegraphics[width=0.9\linewidth]{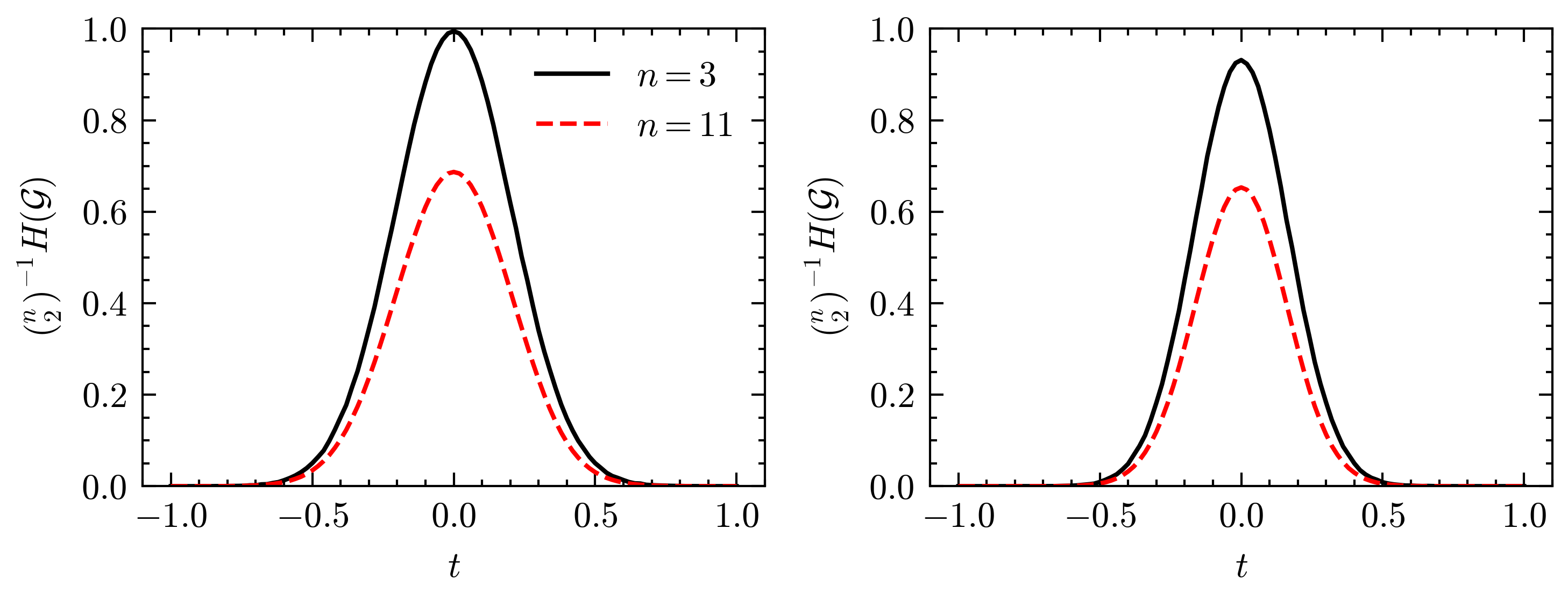}
    \caption{Entropy curves in the theoretical $d\tti$ limit for hard RGGs in the cube, with $n=3$ and $11$ nodes and uniformly (left) and Gaussian (right) distributed nodes. All 4 curves show evidence for a unique maximum at $t=0$.}
    \label{fig:unimodal}
\end{figure}
\section{Edgeworth Expansion for Estimation in Lower Dimensions}
Finally, we will develop a third-order correction to the central limit theorems for distances in $[0,1]^d$ and $\mathbb{T}^d$ in the specific case of the uniform distribution of nodes, as it makes the computation of the correction available in closed form. In theory one could perform this correction for any distribution of nodes of the form (\ref{eq:general_distribution}). \newline

The central limit theorem means we can approximate high dimensional graph entropy to a reasonable degree of accuracy. However, because the formula for the graph probability in the limit does not include the dimension, it does not give us any information about how the entropy scales in dimension. One might hope to do better by including higher order corrections to the central limit theorem. For multivariate CLTs, these come in the form of Edgeworth expansions \cite{withers2000simple}. For a multivariate density function $f$ that converges to a Gaussian density as $d\tti$, we can write the third-order Edgeworth expansion for a finite $d$ as
\begin{equation}
    \label{eq:edgeworth_expansion}
    f(\boldsymbol{q}) \approx N(\boldsymbol{0}, \Sigma)(\boldsymbol{q})\left(1 + \frac{1}{6\sqrt{d}}\sum_{|\alpha| = 3} \mathbb{E}[\boldsymbol{q}^{\alpha}]\mathcal{H}_{\alpha}(\boldsymbol{q})\right)
\end{equation}
Here, $\alpha$ is a vector that indicates which elements of the vector $\boldsymbol{q}$ we should include. For example, if we want to indicate that the expectation is over $q_{(1,2)}^2q_{(1,4)}$, we set $\alpha = (2,0,1,0,...,0) \in \mathbb{N}^{\binom{n}{2}}$. The notation $|\alpha| = 3$ indicates that we should sum over all third-order correlations. $\mathcal{H}_{\alpha}(\cdot)$ is the Hermite polynomial corresponding to $\alpha$, given by
\begin{equation}
    (-1)^{|\alpha|} \partial^{\alpha} N(\boldsymbol{0}, \Sigma)(\boldsymbol{q}) = \mathcal{H}_{\alpha}(\boldsymbol{q})N(\boldsymbol{0}, \Sigma)(\boldsymbol{q})
\end{equation}
where the $\alpha$ in the derivative denotes which derivatives we should take. For example $\alpha = (2,0,1,0,...0)$ means we take the second derivative of the first component, and the first derivative of the third component \cite{skovgaard1986multivariate}. This can be interpreted as including an estimate for the skew of the distribution, which is given by the third moments. To assess the improvement in the multivariate expansion, we will evaluate (\ref{eq:edgeworth_expansion}). By symmetry, there are only 8 instances of $\alpha$ we need to evaluate for any number of nodes $n \geq 3$, corresponding to the number of combinations of 
\begin{equation}
    \label{eq:big_integral}
    \mathbb{E}[q_{ij}q_{kl}q_{mn}] := \int_{[0,1]^6} (\rho(x_i,x_j)-\mu)(\rho(x_k,x_l)-\mu)(\rho(x_m,x_n)-\mu)dx_idx_jdx_kdx_ldx_mdx_n
\end{equation}
there are. Above we let $\rho(\cdot, \cdot)$ stand for either the distance metric on the hypercube or the hypertorus, and therefore $\mu$ will take either the value of $\mu_c$ or $\mu_T$ in each case.

\begin{table}[]
    \centering
    \begin{tabular}{m{5cm}|m{7cm}} \hline
        \hspace{1cm} Correlation Type & \hspace{2.5cm} Diagram \\ \hline
        \hspace{1cm}$e_1 = \mathbb{E}[q_{ij}^3]$ & \vspace{0.25cm} \hspace{2.5cm} \includegraphics[width=0.25\linewidth]{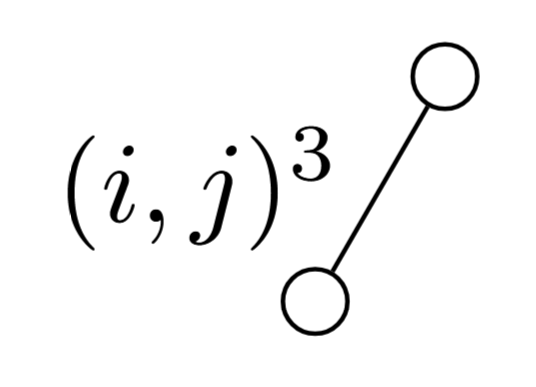} \vspace{0.25cm} \\ \hline
        \hspace{1cm}$e_2 = \mathbb{E}[q_{ij}^2q_{ik}]$ &  \vspace{0.25cm} \hspace{2.5cm} \includegraphics[width=0.3\linewidth]{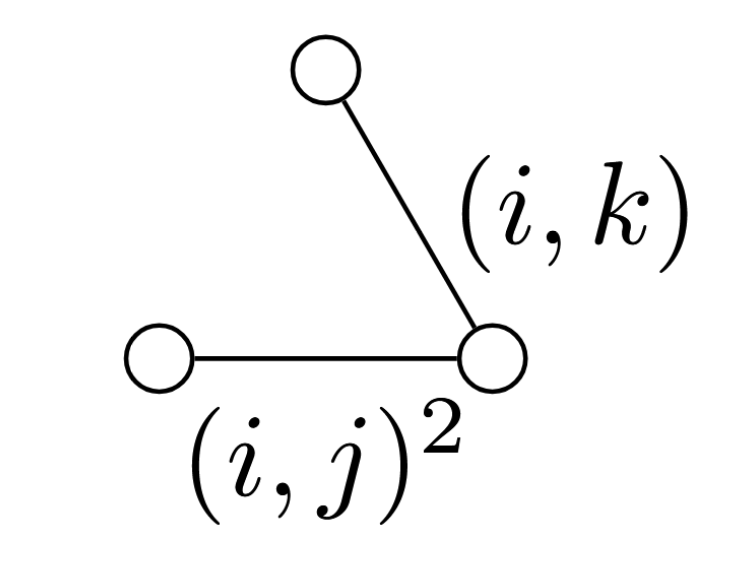}  \vspace{0.25cm}\\ \hline
        \hspace{1cm}$e_3 = \mathbb{E}[q_{ij}q_{jk}q_{ki}]$ & \vspace{0.25cm} \hspace{2.5cm} \includegraphics[width=0.32\linewidth]{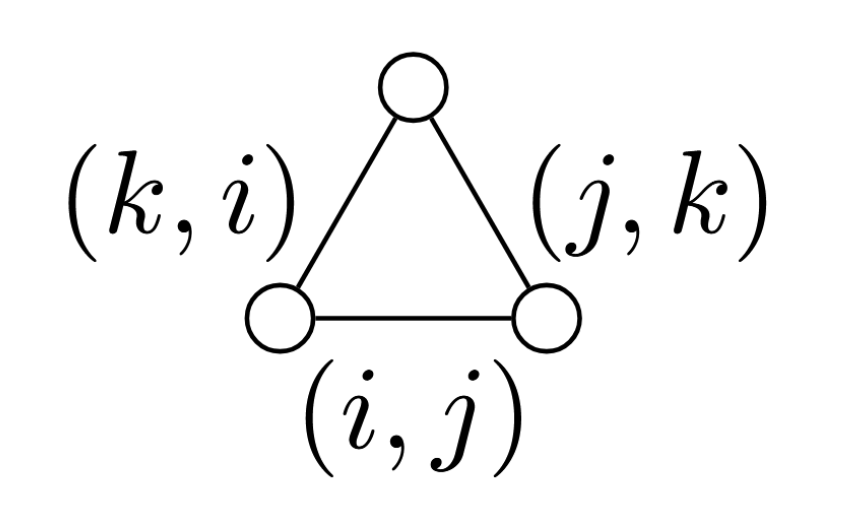} \vspace{0.25cm} \\ \hline
            \hspace{1cm}$e_4 = \mathbb{E}[q_{ij}q_{ik}q_{il}]$ &  \vspace{0.25cm} \hspace{2.3cm} \includegraphics[width=0.32\linewidth]{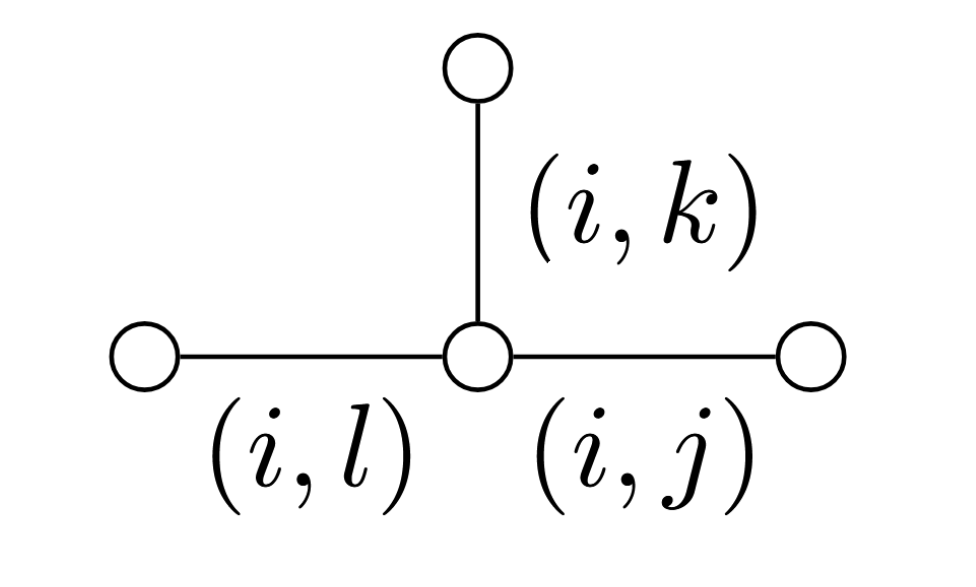} \vspace{0.25cm} \\ \hline
        \hspace{1cm}$e_5 = \mathbb{E}[q_{ij}q_{kl}q_{mn}]$ &  \vspace{0.25cm} \hspace{2cm} \includegraphics[width=0.45\linewidth]{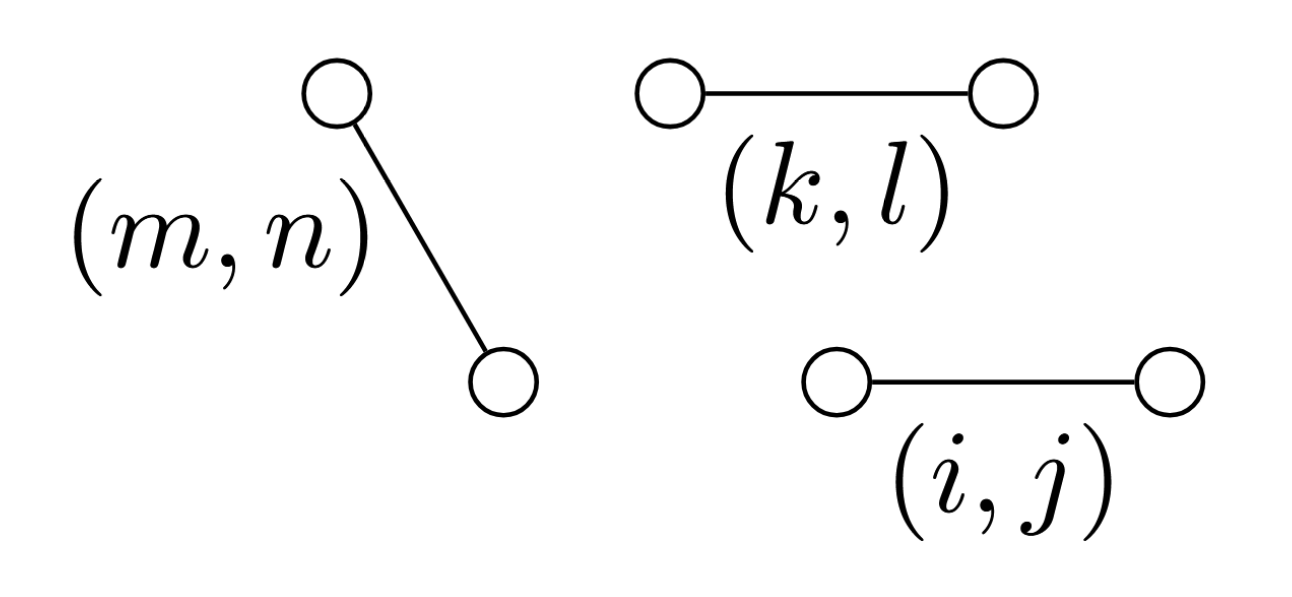} \vspace{0.25cm} \\ \hline
        \hspace{1cm}$e_6 = \mathbb{E}[q_{ij}^2q_{mn}]$ &  \vspace{0.25cm}\hspace{2.2cm} \includegraphics[width=0.4\linewidth]{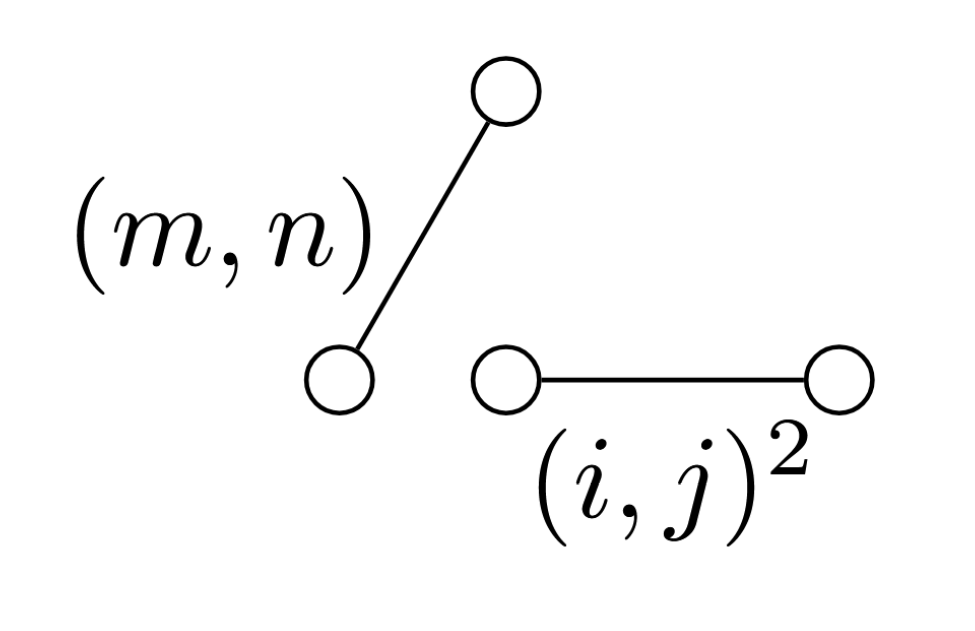} \vspace{0.25cm} \\ \hline
       \hspace{1cm}$e_7 = \mathbb{E}[q_{ij}q_{jk}q_{mn}]$ &  \vspace{0.25cm}\hspace{2.2cm} \includegraphics[width=0.45\linewidth]{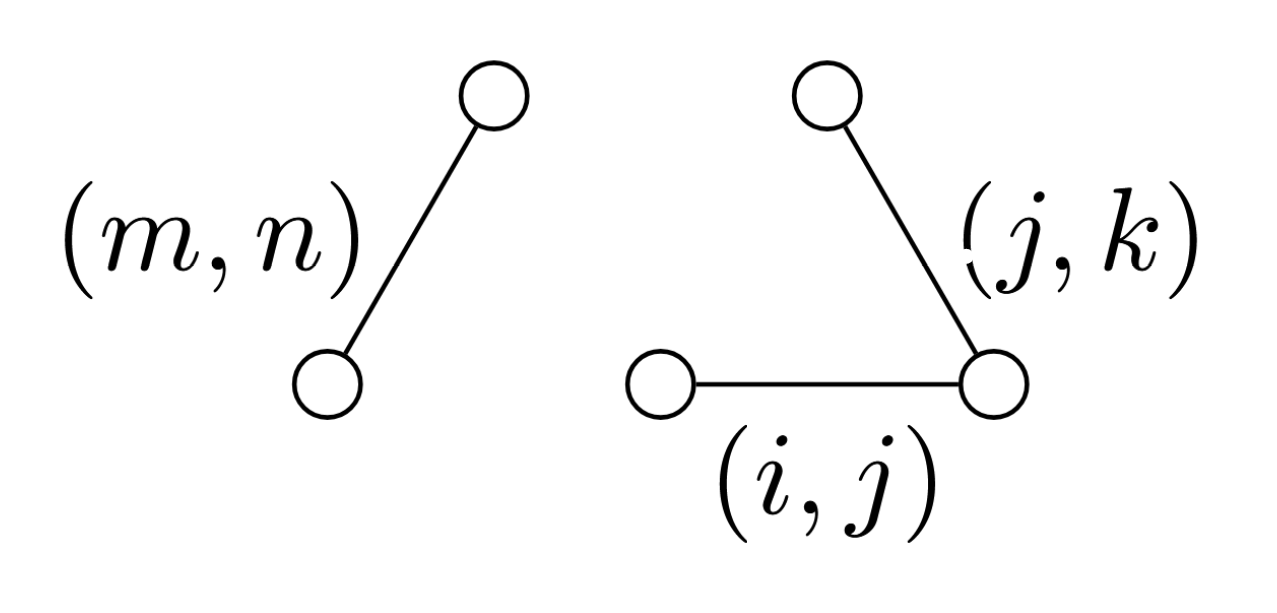} \vspace{0.25cm} \\ \hline
        \hspace{1cm}$e_8 = \mathbb{E}[q_{ij}q_{jk}q_{kl}]$ &  \vspace{0.25cm}\hspace{2.2cm} \includegraphics[width=0.4\linewidth]{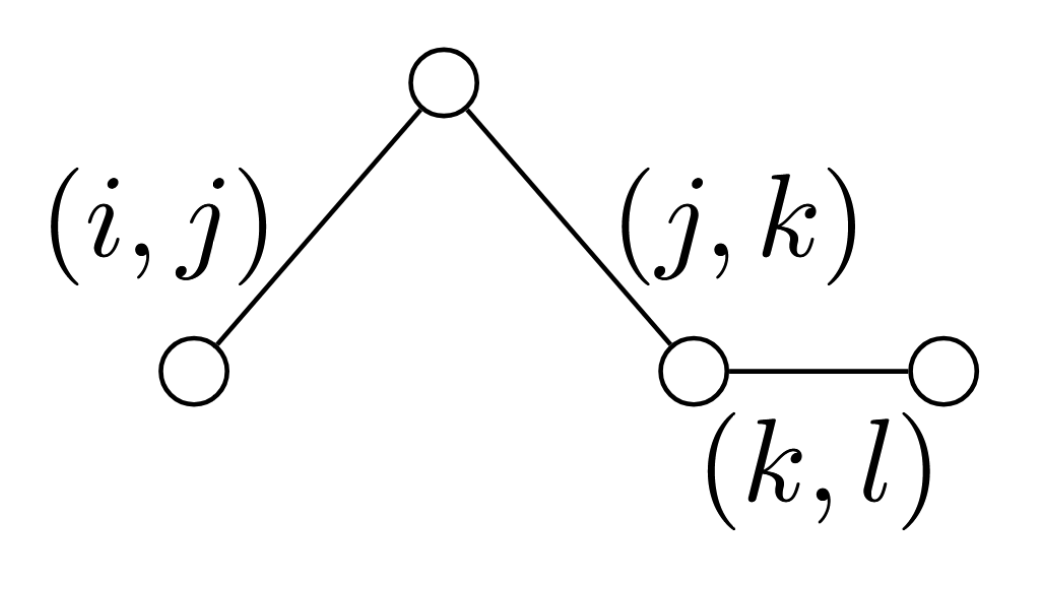}  \vspace{0.25cm}\\ \hline
    \end{tabular}
    \caption{Graphical Representation of the 8 Correlation Types}
    \label{tab:my_label}
\end{table}

 See Table \ref{tab:my_label} for a graphical representation of these correlations. When we compute these expectations for the cube, we find that only the first four are non-zero. These correspond to (substituting $q$ for $\tau$ if we are working in the hypertorus)
\[
    e_1 = \mathbb{E}[q_{ij}^3]
\]
\[
    e_2 = \mathbb{E}[q_{ij}^2q_{ik}]
\]
\[
    e_3 = \mathbb{E}[q_{ij}q_{jk}q_{ki}]
\]
\[
    e_4 = \mathbb{E}[q_{ij}q_{ik}q_{il}]
\]
$e_5,e_6$ and $e_7$ are clearly zero as they all contain one independent edge. Therefore the expectation factorises, and the fact themean of each $q_{ij}$ is 0 implies the whole expectation is 0. That $e_8 = 0$ is given by evaluating the integral (\ref{eq:big_integral}). Next, we need to evaluate the multivariate Hermite polynomials. Due to \cite{withers2000simple}, we have a simple way to do this. If we let $z = \Sigma^{-1}\boldsymbol{q}$, and $Z \sim N(\boldsymbol{0}, \Sigma^{-1})$ (where once again $\Sigma$ can refer to $\Sigma_c$ or $\Sigma_T$ depending on which geometry we are working in), then the Hermite polynomial corresponding to $\alpha$ is given by 
\begin{equation}
    \mathcal{H}_{\alpha}(\boldsymbol{q}) = \mathbb{E}[(z+Z\sqrt{-1})^{\alpha}]
\end{equation}
Writing $H_i$ to correspond to the configuration of distances that correspond to the expectation $e_i$, then we have (recalling that the covariance matrix is index by multi-indices)
\[
    H_1(q_{ij}) = z_{ij}^3 - 3z_{ij}\Sigma^{-1}_{(i,j),(i,j)} 
\]
\[
    H_2(q_{ij}, q_{ik}) = z_{ij}^2z_{ik} - z_{ik}\Sigma^{-1}_{(i,j),(i,k)} - z_{ij}\Sigma^{-1}_{(i,j),(i,j)}
\]
\[
    H_3(q_{ij}, q_{jk}, q_{ki}) = z_{ij}z_{jk}z_{ki} - \Sigma^{-1}_{(i,j),(i,k)}(q_{ij} + q_{jk} + q_{ki})
\]
\[
    H_4(q_{ij}, q_{ik}, q_{il}) = z_{ij}z_{ik}z_{il} - \Sigma^{-1}_{(i,j),(i,k)}(q_{ij} + q_{ik} + q_{il})
\]
Then by substituting this back into (\ref{eq:edgeworth_expansion}), we are able to estimate the probabilities of a given graph by computing a Gaussian expectation. With $\mathcal{A}_g$ as in (\ref{eq:gaussian_expectation}), we have
\begin{equation}
    \label{eq:edgeworth_correction}
    \mathbb{P}_t(g) \approx \mathbb{E}_{\boldsymbol{q}}\left[\mathbb{I}(\boldsymbol{q} \in \mathcal{A}_g)\left(1 + \frac{1}{6\sqrt{d}}\sum_{|\alpha| = 3} \mathbb{E}[\boldsymbol{q}^{\alpha}]\mathcal{H}_{\alpha}(\boldsymbol{q})\right)\right]
\end{equation}
Let $\mathbb{P}(g)$ be the probability of a graph given by the Gaussian limit. and $\mathbb{P}_d(g)$ be the probability of the same graph in a finite dimension $d$. Then the Edgeworth expansion (\ref{eq:edgeworth_correction}) gives that
\begin{equation}
    \mathbb{P}(g) = \mathbb{P}_d(g) + \bigO{d^{-\frac{1}{2}}}
\end{equation}
If we then denote $H(\graph)$ as the entropy given by the Gaussian limit, and $H_d(\graph)$ be the entropy given by the Edgeworth correction, then
\begin{equation}
    H(\graph) = -\sum_{g\in\graph} \prob{g}\log \prob{g}
\end{equation}
\begin{equation}
    = - \sum_{g\in \graph}(\mathbb{P}_d(g) + \bigO{d^{-\frac{1}{2}}})\log \left(\mathbb{P}_d(g) + \bigO{d^{-\frac{1}{2}}}\right))
\end{equation}
\begin{equation}
    = -\sum_{g \in \graph} \left[\mathbb{P}_d(g)(\log (\mathbb{P}_d(g)) + \log \left(1+\bigO{d^{-\frac{1}{2}}}\right) + \bigO{d^{-\frac{1}{2}}}\right]
\end{equation}
\begin{equation}
    = -\sum_{g\in\graph} \left[\mathbb{P}_d(g) \log(\mathbb{P}_d(g)) + \log\left(1+\bigO{d^{-\frac{1}{2}}}\right)\mathbb{P}_d(g) + \bigO{d^{-\frac{1}{2}}}\right]
\end{equation}
\begin{equation}
    \label{eq:entropy_scaling_in_d}
    = H_d(\graph) + \bigO{d^{-\frac{1}{2}}}
\end{equation}

Using the Edgeworth estimate for the individual graph probabilities, we can estimate the ensemble entropy. Figure \ref{fig:entropy_estimate} shows the entropy estimates given by the Gaussian, Edgeworth and simulation methods for entropy estimation. We fitted a curve of the form $H_d(\graph) = a - b(d^{-\frac{1}{2}}+c)$ to match the form of (\ref{eq:entropy_scaling_in_d}). The Edgeworth estimate is within 2 decimal places of the simulated entropy for $d \geq 15$. The advantage of using this method is that the computation time is independent of dimension because the third-order expectations are the same for each dimension, so it can evaluate an estimate to the entropy at any dimension with the same speed, which makes it much faster than the simulation method for higher dimensions. In practice, the simulation method was faster for $d \leq 75$ whereas the Edgeworth method became faster for $d > 75$. The increase in speed comes with a tradeoff of accuracy, and at a high enough dimension ($d\sim250$) it is much faster (and more accurate) to use the Gaussian method, with no Edgeworth corrections. In the torus, we see very similar patterns. In both geometries, the Edgeworth entropy estimate is consistently less than the simulated entropy estimate. This is due to a combination of numerical error, which detracts from the `uniformity' of the resulting distribution, which can only have the effect of reducing entropy. Also, the higher moments missing from the expansion will contribute to the error. In the torus the Edgeworth estimate runs faster than the simulation method in every dimension.

\begin{figure}
    \centering
    \includegraphics[width=0.48\linewidth]{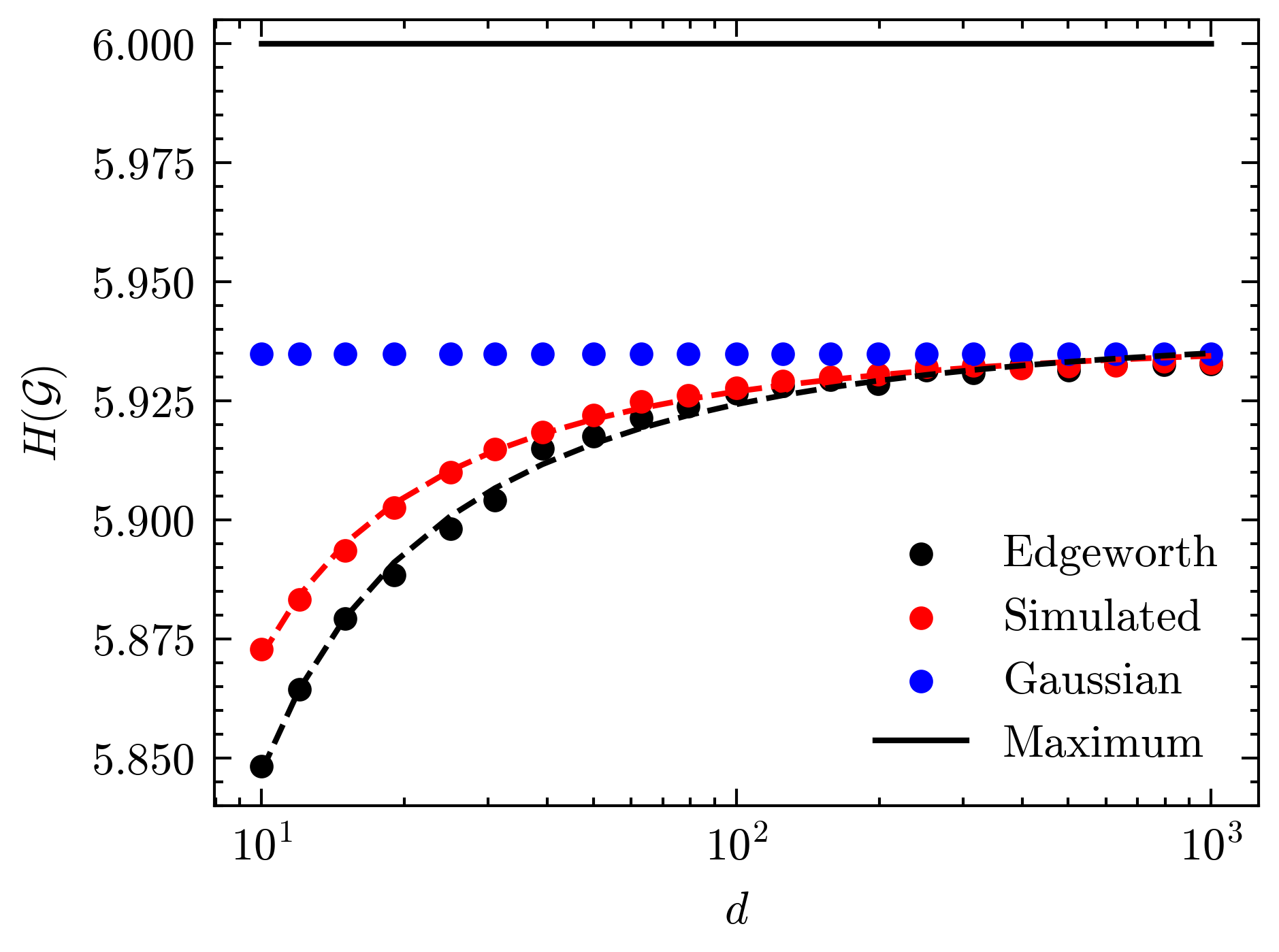}
    \includegraphics[width=0.48\linewidth]{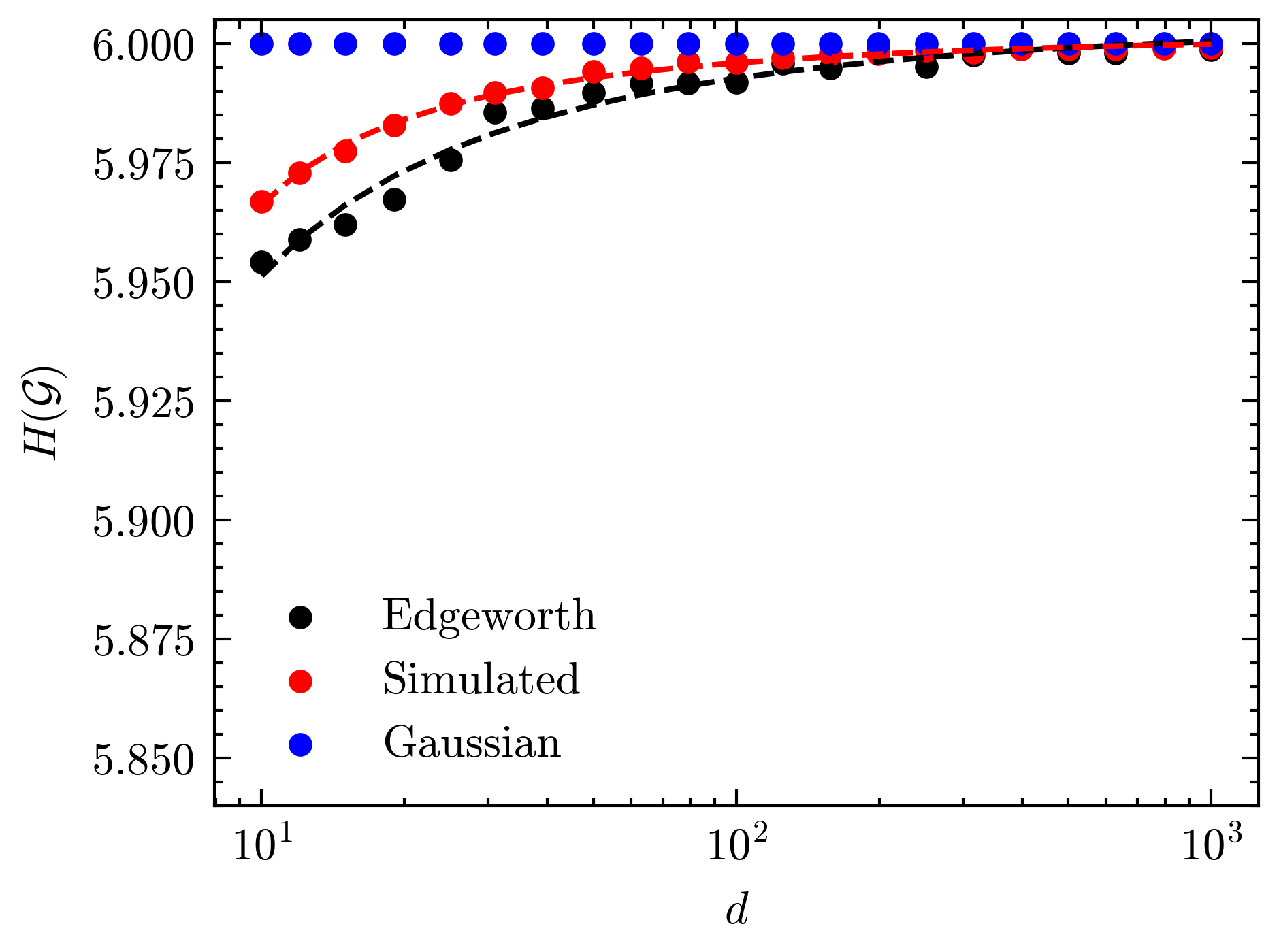}
    \caption{The entropy estimate for an ensemble of $4$-node hard RGGs with uniform distributed nodes in the $d$-dimensional hypercube (left) and hypertorus (right) given by the Gaussian and Edgeworth estimates against the simulated entropy. The dotted lines show a fit of the curve $H(\graph) = a-b(d^{-\frac{1}{2}}+c)$. The solid line is the entropy of the uniform distribution on $\binom{4}{2}=6$ variables, which is the theoretical maximum entropy.}
    \label{fig:entropy_estimate}
\end{figure}

\section{Conclusion}
In this work, we have investigated the distribution of random geometric graphs in high and low-dimensional cubes and tori, under the assumption that the nodes are distributed independently in each coordinate. In low dimensions, we calculated exactly the entropy of an RGG ensemble in closed form for the first time, and showed some numerical results about where the maximum of the entropy lies. We showed that, if the underlying geometry has periodic boundary conditions and the nodes are uniformly distributed, then the distribution of hard RGGs will converge to the Erd\H os-R\'enyi ensemble in distribution for a fixed number of nodes as the dimension tends to infinity. However, we proved that we do not see convergence to the ER ensemble when the nodes are not uniformly distributed. This challenges standard ideas about using RGGs to model high-dimensional data which is not usually assumed to be uniform. In the high-dimensional cube, provided the distribution of nodes has kurtosis greater than 1, adjacent edges are \textit{always} positively correlated, and so the limiting distribution is always different to the ER ensemble, and has lower entropy. In both geometries, \textit{soft} RGGs converge to the ER ensemble as $d\tti$, because soft connection functions remove the dependence on distance between nodes as $d\tti$. We then showed that the distribution of hard RGGs in the high-dimensional cube has a stationary point at the mean distance between nodes, and presented numerical evidence that this should be a global maximum in two specific cases, the uniform and truncated Gaussian distribution. Finally, we developed an Edgeworth correction to the central limit theorem for the uniform distribution of nodes in the $d$-cube and $d$-torus, allowing us to numerically approximate the scaling of RGG entropy in dimension. We showed that the entropy of a hard RGG ensemble in a finite dimension $d$ is an order $d^{-\frac{1}{2}}$ correction away from the entropy of the ensemble in infinite dimensions for both the cube and the torus. \newline

There are numerous possible extensions to this work. First, it would be interesting to see which other geometries converge to the ER limit, and which converge to different limits. In particular, different $L^p$ balls could be considered, and it could be useful to know whether correlations between adjacent distances are preserved in the $d\tti$ limit for any $p < \infty$ (in the sense that the unit cube can be viewed as the $L^{\infty}$ ball). This work also motivates the study of RGGs on the torus with non-uniformly distributed nodes. We may be able to develop an information-theoretic test for geometry when sampling RGGs with non-uniformly distributed nodes. For example, here we have shown that a `larger-than-expected' proportion of complete and empty graphs is indicative of underlying positive correlation.

\section*{Funding}
% OB Acknowledges funding from the EPSRC Centre for Doctoral Training in Computational Statistics and Data Science (COMPASS). 
This work was supported by the EPSRC Centre for Doctoral Training in Computational Statistics and Data Science (COMPASS) to OB. 

\section*{Acknowledgments}
This work made use of the supercomputer BluePebble, one of the University of Bristol's high-performance computing facilities \url{https://www.bristol.ac.uk/acrc/high-performance-computing/}. We would also like to thank Kiril Bangachev for his helpful comments.

\appendix
\section*{Appendix}
\section{Exact Graph Probabilities}
\label{exact_calcs}
\subsection{Exact Graph Probabilities for the 3 node hard RGG in the 1D Torus}
As explained in Section 3, we will calculate the probabilities of a graph having $k$ edges, denoted by $p_k$. For brevity, we will write $\rho_T(x,y)$ as $|x-y|$. Assuming $r_0 < 1/3$ for now, we have
\begin{equation}
    p_3 = \int_{[0,1]^2} \mathbb{I}\{|z_2| <r_0\}\mathbb{I}\{|z_3|<r_0\}\mathbb{I}\{|z_2-z_3|<r_0\}dz_2dz_3 = 3r_0^2
\end{equation}
We can also re-centre the domain on 0 due to translation invariance, and change the bounds of integration to $[-1/2,1/2]^2$.
\begin{equation}
    p_3 = \int_{-r_0}^{r_0}\int_{-r_0}^{r_0}\mathbb{I}(|z_2-z_3| < r_0) dz_2dz_3
\end{equation}
\begin{equation}
    = \int_{-r_0}^{r_0} \int_{\max(-r_0, z_3-r_0)}^{\min(r_0, z_3+r_0)}dz_2dz_3
\end{equation}
\begin{equation}
    =2\int_0^{r_0} (2r_0 - z_3) dz_3 = 3r_0^2
\end{equation}
By employing a similar technique, we find
\begin{equation*}
    p_2 = \int_{[0,1]^3} \mathbb{I}\{|z_1-z_2|<r_0\}\mathbb{I}\{|z_1-z_3|<r_0\}\mathbb{I}\{|z_2-z_3|>r_0\}dz_1dz_2dz_3
\end{equation*}
\begin{equation}
    = \int_{-r_0}^{r_0}\int_{-r_0}^{r_0} (1-\mathbb{I}\{|z_2-z_3|<r_0\}) dz_2dz_3 = 4r_0^2 - p_3 = r_0^2
\end{equation}
\begin{equation}
    p_1 = \int_{[0,1]^3} \mathbb{I}(|z_1-z_2| < r_0)(1-\mathbb{I}(|z_1-z_3|<r_0))(1-\mathbb{I}(|z_2-z_3|<r_0))dz_1dz_2dz_3
\end{equation}
\begin{gather}
    \int_{[0,1]^2} \mathbb{I}\{|z_2|<r_0\} - \mathbb{I}\{|z_2|<r_0\}\mathbb{I}\{|z_2-z_3|<r_0\} \nonumber \\
    - \mathbb{I}\{|z_2|<r_0\}\mathbb{I}\{|z_3|<r_0\}\mathbb{I}\{|z_2-z_3|<r_0\}dz_2dz_3 + p_3
\end{gather}
\begin{equation}
    = 2r_0 - 5r_0^2
\end{equation}
\begin{equation}
    p_0 = 1 - p_3 - 3p_2 - 3p_1 = (1-3r_0)^2
\end{equation}
When $1/2 \geq r_0 > 1/3$, note that $p_0 = 0$. Using the same method as above, we have for $r_0 > 1/3$,
\begin{equation*}
    p_3 = 12r_0^2-6r_0+1
\end{equation*}
\begin{equation*}
    p_2 = 6r_0 - 1 - 8r_0^2
\end{equation*}
\begin{equation}
    p_1 = (2r_0-1)^2
\end{equation}

\subsection{Exact Graph Probabilities for the 3 node hard RGG in $[0,1]$}
Now $|\cdot|$ denotes the standard absolute value. Without loss of generality we may assume that $z_1 < z_2 < z_3$, then multiply the result by 6. To start we will set $r_0 < \frac{1}{2}$. 
\begin{equation}
    p_3 = 6\int_0^1\int_0^{z_3}\int_0^{z_2} \mathbb{I}\{z_1>z_2-r_0\}\mathbb{I}\{z_1>z_3-r_0\}\mathbb{I}\{z_2>z_3-r_0\}dz_1dz_2dz_3
\end{equation}
\begin{equation}
    = 6 \int_0^1 \int_0^{z_3} \mathbb{I}(z_2 > z_3-r_0)\int_{\max(0, z_3-r_0)}^{z_2}dz_1dz_2dz_3
\end{equation}
\begin{equation}
    = 6 \int_0^1 \int_{\max(0, z_3-r_0)}^{z_3} z_2 - \max(0, z_3-r_0) dz_2dz_3
\end{equation}
\begin{equation}
    = 6 \int_0^1 \frac{z_3^2}{2}+\frac{(\max(0,z_3-r_0))^2}{2} - z_3\max(0, z_3-r_0) dz_3
\end{equation}
\begin{equation}
     = 3r_0^2-2r_0^3
\end{equation}
Repeating for $p_0$, $p_1$ and $p_2$ we obtain
\begin{equation}
    p_2 = r_0^2 - \frac{4}{3}r_0^3
\end{equation}
\begin{equation}
    p_1 = \frac{14}{3}r_0^3 - 6r_0^2 + 2r_0
\end{equation}
\begin{equation}
    p_0 = (1-2r_0)^{3}
\end{equation}
The same calculations for $r_0 \geq 1/2$ give:
\begin{equation}
    p_3 = 3r_0^2 - 2r_0^3
\end{equation}
\begin{equation}
    p_2 = \frac{4}{3}r_0^3-\frac{1}{3}({3}r_0-1)^2
\end{equation}
\begin{equation}
    p_1 = \frac{2}{3}(1-r_0)^{3}
\end{equation}
\begin{equation}
    p_0 = 0
\end{equation}

\section{Numerical Approximation Procedure}
\label{error}

Here we describe in detail the numerical calculation of the entropy calculated in Table \ref{tab:max_entropy}. The overarching strategy is as follows. First, we approximately determine the region of $\hat{r}_0$, and then perform a grid search on this approximate region. We fit a quadratic to these points using least squares regression, and use standard techniques to estimate the covariance matrix of the coefficients of this fit. Then, we use the delta method for error propagation to estimate the error on the estimated maximum entropy value propagated by the error on the coefficients. \\

As a preliminary step, we first simulate an ensemble of $10^6$ graphs over the range $r_0 \in [0,D]$ with a step size of $D/50$. This allows us to approximately find the location of the maximum. Next, out of the intervals $I_i = [i/50, (i+1)/50]$, we find the one that contains the approximate maximum from the last step, say $I_{k}$.\\

We now take the interval $\hat{I} := I_{k-1} \cup I_k \cup I_{k+1}$, and perform another grid search. With $L := 10^8$, we simulate $L$ graphs at evenly spaced $r_0 = x_i \in \hat{I}$ for $i=1,...,N$ where $N=100$, and calculate the entropy of the resulting ensemble at each $r_0 = x_i$. At the end of the procedure, we check that the calculated maximum lies in $\hat{I}$, which was the case for each experiment we ran. \\

We then perform a quadratic least squares regression fit on the remaining points. This gives us an equation of the form $\tilde{y}_i = ax_i^2 + bx_i + c$ locally around the maximum. We find the value of the entropy maximiser, $\hat{r}_0$, as standard, 
\begin{equation}
    \hat{r}_0 = -\frac{b}{2a}
\end{equation}
and $\bar{p}_{max}$ is calculated using numerical integration of
\begin{equation}
    \bar{p}_{max} = \int_0^Df(r)p(r/\hat{r}_0)dr
\end{equation}
on which the error is less than $10^{-10}$. It remains to calculate the error of the approximation procedure. Using the results from \cite{roulston1999estimating}, we know that the variance of each entropy estimate $\tilde{y}_i$ is given by
\begin{equation}
    \sigma_i^2 = \frac{1}{L}\sum_{j=1}^{2^{\binom{n}{2}}} (\log(p_j) + \tilde{y}_i)^2p_j(1-p_j)
\end{equation}
for each $j$, we have that \begin{equation}
    (\log(p_j) + \tilde{y}_)^2p_j(1-p_j) \leq (\log(p_j) + \binom{n}{2})^2p_j(1-p_j)
\end{equation}
\begin{equation}
    \leq \binom{n}{2}^2 + 2\binom{n}{2}\log(p_j)p_j(1-p_j) + \log(p_j)^2p_j(1-p_j) 
\end{equation}
now it is simple to check that (e.g. by differentiating and finding the maximum) that $\log(p_j)p_j(1-p_j) \leq 1$ for each $p_j \in [0,1]$. So, a very crude upper bound on $\sigma_i^2$ is given by
\begin{equation}
    \sigma_i^2 \leq \frac{1}{L} 2^{\binom{n}{2}}\left(1+\binom{n}{2}\right)^2 = \frac{128}{L}
\end{equation}
when $n=3$. Now, we need an estimate on the error of the coefficients of the quadratic approximation. The error on the approximation is given by the residual sum of squares,
\begin{equation}
    RSS = \sum_{i=1}^{N} (\tilde{y}_i - y_i)^2 = \sum_{i=1}^N \epsilon_i^2
\end{equation}
where we assume that $\tilde{y}_i= y_i+\epsilon_i$, with $\epsilon_i \sim N(0,\sigma^2)$, and $y_i$ is the true value of the entropy evaluated at $r_0 = x_i$. Now, we know that
\begin{equation}
    \mathbb{E}[RSS] = \sum_{i=1}^N\mathbb{E}[\epsilon_i^2] = N\sigma^2 \leq \frac{128N}{L}
\end{equation}
Suppose $\vec{\theta} = (a,b,c)$ is the vector of estimate parameters. It is then standard that the covariance matrix of the parameters is given by
\begin{equation}
    Cov(\vec{\theta}) = \sigma^2(X^TX)^{-1}
\end{equation}
where $X$ is the $3\times N$ matrix, given by $X_{ij} = x_j^i$. Then, if we write $S_k := \sum_{i=1}^N x_i^k$,
\begin{equation}
    Cov(\vec{\theta}) = \sigma^2\left(\begin{matrix}
         S_4 &  S_3 & S_2 \\
         S_3 &  S_2 & S_1 \\
         S_2 & S_1 & N
    \end{matrix}\right)^{-1}
\end{equation}
The determinant of $(X^TX)$ is given by
\begin{equation}
    \det(X^TX) = S_4(NS_2-S_1^2) - S_3(NS_3 - S_1S_2) + S_2(S_3S_1-S_2^2)
\end{equation}
We will run into an issue here that, since the values of $x_i$ are very close to each other (generally all within 0.2 of each other, and 100 of these values) the conditioning number of this matrix is of the order $10^{17}$. To circumvent this issue, we standardise the $x_i$ values. Set $\tilde{S}_k$ to be the sum 
\begin{equation}
    \tilde{S}_k := \left(\frac{\sum_{i=1}^N (x_i - \mu_x)}{\sigma_x}\right)^k
\end{equation}
where $\mu_x$ and $\sigma_x$ are the mean and standard deviation of the $x_i$ respectively. We then perform the quadratic least squares regression on the rescaled variables, which are more spaced out, reducing the magnitude of the determinant. After re-scaling, we obtan the matrix
\begin{equation}
    \tilde{X^T}{\tilde{X}} = \left(\begin{matrix}
        \tilde{S}_4 & \tilde{S}_3 & \tilde{S}_2 \\
        \tilde{S}_3 & \tilde{S}_2 & \tilde{S}_1 \\
        \tilde{S}_2 & \tilde{S}_1 & N
    \end{matrix}\right)
\end{equation}
which has conditioning number of order $10^2$. To estimate the error on the rescaled coefficients $\tilde{\theta} = (\tilde{a}, \tilde{b}, \tilde{c})$, we use the delta method. This is justified because each of the calculated coefficients is a weighted sum of the $\tilde{y}_i$, and so, with $\hat{\theta}$ as the true coefficients of the quadratic expansion of the entropy, the quantity $\sqrt{N}(\tilde{\theta} - \hat{\theta})$ is approximately Gaussian. See e.g. \cite{bishop1975discrete}.

We need to calculate the variance on two estimates. The first is the rescaled $x_i$ that maximises the quadratic. Denote this by $\tilde{x}_{max}$. Then, we need to calculate the variance of the corresponding maximum entropy, given by $y_{max} = \tilde{a}\tilde{x}_{max}^2 + \tilde{b}\tilde{x}_{max} + \tilde{c}$. The delta method says that to estimate the variance of the unscaled $x_{max}$, we first note that the scaling means we can write the unscaled $x_{max}$ in terms of the scaled coefficients, and take the partial derivative in each coordinate to give the Jacobian
\begin{equation}
    J_{x_{max}} = \left(-\frac{\tilde{b}\sigma_x}{\tilde{a}^2}, \frac{\sigma}{\tilde{a}}, 0\right)
\end{equation}
then
\begin{equation}
    \label{eq:x_var}
    Var(x_{max}) \approx \sigma^2 J_{x_{max}}(\tilde{X}^T\tilde{X})^{-1}J_{x_{max}}^T \leq \frac{128N}{L}J_{x_{max}}(\tilde{X}^T\tilde{X})^{-1}J_{x_{max}}^T
\end{equation}
Similarly, for $y_{max}$, we get
\begin{equation}
    J_{y_{max}} = \left(\frac{\tilde{b}^2}{4\tilde{a}^2}, -\frac{\tilde{b}}{2\tilde{a}}, 1\right)
\end{equation}
so
\begin{equation}
    \label{eq:y_var}
    Var(y_{max}) \approx \sigma^2 J_{y_{max}}(\tilde{X}^T\tilde{X})^{-1}J_{y_{max}}^T \leq \frac{128N}{L}J_{y_{max}}(\tilde{X}^T\tilde{X})^{-1}J_{y_{max}}^T
\end{equation}
Using this gives us a method to estimate the standard error on the $\hat{r}_0$ (and therefore $\bar{p}_{max}$) and maximum entropy estimates, by taking the square root of the variance for $x_{max}$ and $y_{max}$ respectively. As an example, Figure \ref{fig:error_estimate} shows the quadratic fit for the $\eta=2$ Rayleigh fading SRGG in $\mathbb{T}^1$. The error estimate on the $\hat{r}_0$ value is $4.75\times 10^{-7}$ and the error on the maximum entropy estimate is $2.91\times 10^{-4}$. These are the square root of the upper bound in (\ref{eq:x_var}) and (\ref{eq:y_var}) respectively.
\begin{figure}
    \centering
    \includegraphics[width=0.8\linewidth]{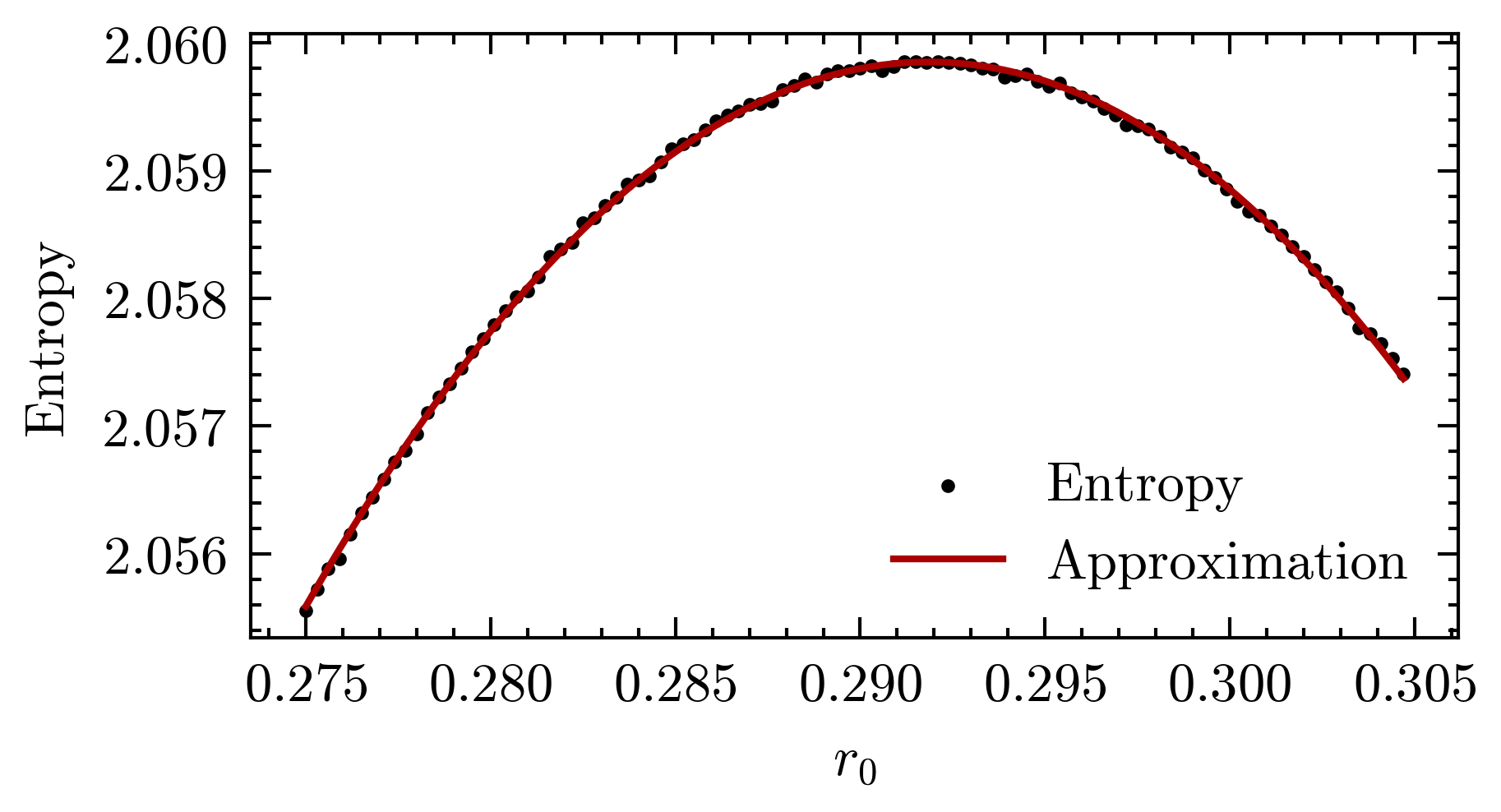}
    \caption{Example quadratic fit for the maximum entropy estimation. Here $L=10^8$, $N=100$ and the model is an $n=3$ node SRGG with a Rayleigh fading connection function with $\eta=2$ in $\mathbb{T}^1$.}
    \label{fig:error_estimate}
\end{figure}

\section{The Distribution with Kurtosis 1 is Bernoulli}
\label{sec:bernoulli}
Suppose $X$ is a real random variable with mean $\mu$, and kurtosis 1. Then by definition,
\begin{equation}
    \mathbb{E}[(X-\mu)^4] = \mathbb{E}[(X-\mu)^2]^2
\end{equation}
Let $Y=(X-\mu)^2$, then the above becomes
\begin{equation}
    \mathbb{E}[Y^2]-\mathbb{E}[Y]^2 = 0
\end{equation}
so $Y$ has 0 variance and is therefore almost surely constant. That is there exists a constant $c$ such that
\begin{equation}
    \prob{Y=c} = 1
\end{equation}
And therefore
\begin{equation}
    \prob{X \in \{\mu-c, \mu+c\}} = 1
\end{equation}
and therefore $X$ is a two-point (Bernoulli) random variable. It remains to show that $\prob{X = \mu-c} = \prob{X = \mu + c} = \frac{1}{2}$. Indeed if $\prob{X = \mu-c} = p = 1 - \prob{X=\mu+c}$, then
\begin{gather}
    \mathbb{E}[(X-\mu)^4] - \mathbb{E}[(X-\mu)^2]^2 = p^4(1-p) + p(1-p)^4 - p^4(1-p)^2 - 2p^3(1-p)^3 \nonumber\\ 
- (1-p)^4p^2
\end{gather}
\begin{equation}
    = p(1-p)(p^3 + (1-p)^3 - p^3(1-p) - 2p^2(1-p)^2 - p(1-p)^3) = 0
\end{equation}
so assuming $p \neq 0$ and $p \neq 1$, this implies
\begin{equation}
    p^4 - 2p^2(1-p)^2 + (1-p)^4 = 0
\end{equation}
\begin{equation}
    \Rightarrow (p^2-(1-p)^2)^2 = 0
\end{equation}
\begin{equation}
    \Rightarrow p = (1-p) = \frac{1}{2}
\end{equation}
so that $X$ is either Bernoulli with parameter $1/2$, or almost surely constant.

\section*{References}

\begingroup
    \renewcommand{\section}[2]{}

\endgroup
\end{document}